\newcommand{\N}{\mathbb{N}}
\newcommand{\Z}{\mathbb{Z}}
\newcommand{\R}{\mathbb{R}}
\newcommand{\C}{\mathbb{C}}
\newcommand{\RE}{\operatorname{Re}}
\newcommand{\IM}{\operatorname{Im}}
\newcommand{\Res}{\operatorname{Res}}
\newcommand{\eps}{\varepsilon}
\newcommand{\bigoh}{\mathcal{O}}
\newcommand{\diag}{\operatorname{diag}}
\newcommand{\intd}{\mspace{0.3mu}\operatorname{d}\mspace{-2.0mu}}
\newcommand{\supp}{\operatorname{supp}}
\newcommand{\Id}{\operatorname{Id}}
\newcommand{\Cinfty}{\ensuremath{C^{\infty}}}
\newcommand{\Ccinfty}{\ensuremath{\Cinfty_c}}
\newcommand{\Comega}{C^{\omega}} 
\newcommand{\Dprime}{{\ensuremath{\mathcal{D}^\prime}}}
\newcommand{\Dprimebar}{\bar\Dprime}
\newcommand{\Eprime}{\ensuremath{\mathcal{E}^\prime}}
\newcommand{\T}{\ensuremath{\mathcal{T}}}
\newcommand{\lie}{\mathfrak}
\newcommand{\halb}{{\textstyle\frac{1}{2}}}
\newcommand{\bv}{\beta}  
\newcommand{\ad}{\operatorname{ad}}
\renewcommand{\a}{\lie{a}}  
\newcommand{\aC}{\lie{a}^*_{\C}}
\newcommand{\D}{{\mathbb D}}
\newcommand{\E}{E}
\newcommand{\Et}{\bar\E}
\newcommand{\LinOp}{{\mathcal L}} 
\newcommand{\Aprime}{{\mathcal A}'} 
\newcommand{\sfcn}{\phi}  
\newcommand{\cfcn}{\mathbf{c}} 
\newcommand{\ksq}[1]{\langle#1,#1\rangle}
\newcommand{\laplaceop}{\Delta_X}
\newcommand{\Lloc}{L_{\operatorname{loc}}}
\newcommand{\Sob}[1]{H^{#1}} 
\newcommand{\SobDot}[1]{\dot H^{#1}}
\newcommand{\Rn}[1]{{\romannumeral #1}}
\newcommand{\RN}[1]{\uppercase\expandafter{\romannumeral #1\relax}}
\newtheorem{theorem}{Theorem}[section]
\newtheorem{proposition}[theorem]{Proposition}
\newtheorem{corollary}[theorem]{Corollary}
\newtheorem{lemma}[theorem]{Lemma}
\theoremstyle{definition}
\newtheorem{remark}[theorem]{Remark}
\definecolor{gruen}{rgb}{0,0.5,0}
\title{Resonances and Scattering Poles
  \\ in Symmetric Spaces of Rank One}
\author{S.~Hansen, J.~Hilgert, and A.~Parthasarathy}
\begin{document}

\maketitle

\begin{abstract}
We relate resolvent and scattering kernels for the Laplace operator on Riemannian symmetric spaces of rank one via boundary values in the sense of Kashiwara-{\=O}shima.
From this, we derive that the poles of the corresponding meromorphic continuations agree in a half-plane,
and the residues correspond to each other under the boundary value map, so in particular the multiplicities agree as well.
In the opposite half-plane, which is the square root of the resolvent set,
the resolvent has no poles, whereas the scattering poles agree with the poles of the standard Knapp--Stein intertwiner.
As a by-product of the underlying ideas, we obtain a new and self-contained proof of Helgason's conjecture for distributions in the case of rank one symmetric spaces.
\end{abstract}

\section{Introduction}

The close relation between resonances and scattering poles was first established rigorously for  surfaces with hyperbolic ends by L. Guillop\'e and M. Zworski in \cite{GuiZworski97riesurf}. 
Their results were extended by D. Borthwick and P. Perry in \cite{BorthwPerry01} and C. Guillarmou in \cite{Guillarmou05reso,GuillaBarreto08ach} to the case of asymptotically real (and in part also complex) hyperbolic manifolds. The prime examples of such manifolds are locally symmetric spaces given as quotients of real, respectively complex, hyperbolic spaces by convex cocompact discrete subgroups. In this note we treat the problem in the case of arbitrary noncompact Riemannian symmetric spaces of rank one. This means we deal with real, complex, quaternionic and octonionic hyperbolic spaces (which exists only in dimension three over the octonions).  Extensions to locally symmetric spaces will be addressed elsewhere.  

Let $\laplaceop$ be the positive Laplacian on a Riemannian
symmetric space $X=G/K$ of rank one. We consider a modified resolvent $R_\zeta$, where $\zeta$ is a square root of the spectral parameter. For each $\zeta$ the Schwartz kernel of $R_\zeta$ satisfies, away from the diagonal, the Laplace eigenequation in each variable. It is well-known \cite{MiatelloWill99resid,HilgertPasquale09reson} that as a family of distributions it admits a meromorphic continuation to the entire complex plane with poles, called resonances, of finite rank.     

The symmetric space $X$ can be realized as an open ball with a natural boundary $B$ which is diffeomorphic to a sphere. The maximal compact subgroup $K$ of $G$ acts transitively on $B$, so it can be written as $B=K/M$. In fact, the action extends to an action of $G$ with stabilizer $P=MAN$, a (minimal) parabolic subgroup of $G$.
The asymptotic behaviour of Laplace eigenfunctions at $B$ is understood by geometric scattering theory on $X$.
In \cite{SemenTianSanskii76}, a Lax--Phillips type scattering theory for multi-temporal wave equations was developed,
and it was shown that the corresponding scattering matrices basically coincide with the intertwining operators between spherical principal series representations as introduced by Helgason, Schiffmann and Knapp-Stein. These intertwining operators depend meromorphically on a complex parameter (see e.g. \cite{Knapp86repexa}). 

The starting point for the identification of resonances and scattering poles in \cite{BorthwPerry01} and \cite{Guillarmou05reso} is the observation from \cite{JoshiSaBarreto00invscatt} that for asymptotically hyperbolic manifolds the scattering matrix can be written as a boundary value, in the framework of the Mazzeo-Melrose $0$-calculus, of the resolvent kernel. In contrast, for symmetric spaces of rank one, our key observation is that a slight extension of
the theory of boundary values of eigenfunctions for invariant differential operators allows us to establish a similar relation by considering boundary values of the resolvent kernels.

More precisely, recall that functions (more generally, hyperfunctions) on $B$ are mapped to Laplace eigenfunctions via the Poisson transform, the eigenvalues and the Poisson transform being paramatrized by the same complex parameter. For generic parameters, the Poisson transform is invertible, which, in rank one, was first proved in \cite{Lewis78eigenf}. In higher rank, the corresponding statement became known as  Helgason's conjecture. This conjecture was proved in \cite{Kashiwara78eigen} by showing that for generic parameters there is a boundary value map on the space of Laplace eigenfunctions which inverts the Poisson transform. The proof uses Sato's theory of hyperfunctions and the microlocal theory of boundary values for partial differential operators with regular singularities as established in \cite{KashiwaraOshima77}. In an appendix, we give a new and self-contained proof of the existence of a boundary value map  for rank one symmetric spaces, following the simplified approach from \cite{Oshima83bdryval} closely. This yields a proof of Helgason's conjecture (at the level of distributions) using only techniques from partial differential equations. Our proof does not involve the rather restrictive assumption $(A)$ of \cite{Kashiwara78eigen},
but rather their assumption $(A')$. Moreover, it is an important and useful feature of our approach that Fatou-type theorems 
can be used directly to calculate boundary values. As a result, we obtain a version of the boundary value map that can be applied to each argument of the resolvent kernel. In this way, we can not only find the Poisson kernel as a boundary value of the resolvent kernel, but also the scattering matrix.
This gives the equality of resonances and scattering poles in the halfplane $\IM\zeta>0$,
and also enables us to establish a bijection between the ranges of the respective residues.
The (physical) halfplane $\IM\zeta < 0$ contains no resonances.
However, the scattering poles in this half-plane agree with poles of the Knapp--Stein intertwiner.

We limit our considerations to the rank one case because in higher rank it is not clear if
the resolvent kernel satisfies, away from the diagonal, the joint eigenequations
of the algebra of invariant differential operators.
This would be necessary to carry over our construction of boundary values of the resolvent kernel.

We now describe the organization of the paper briefly. In Sections~\ref{sect-Poisson}
and \ref{sect-resolvent}
we summarize the features of the Poisson transform and the meromorphically extended resolvent kernel that we need in the sequel. In Section~\ref{sect-spectral} we compute the spectral density of the shifted Laplacian in terms of the Poisson transform. In view of Stone's formula, this boils down to Theorem~\ref{thm-resolvent-poisson} which relates the resolvent to the Poisson transform and is based on the asymptotic expansion \eqref{R-asymptotics} of the resolvent kernel at the boundary. In Section~\ref{sect-Resonances} we recall the
location of the resonances and the corresponding residues. In fact, we use the results from Section~\ref{sect-spectral} to give short proofs for most of the  facts mentioned, including a new formula for the residues. An exception is the finite rank property, obtained using highest weight theory, for which we simply cite  \cite{MiatelloWill99resid,HilgertPasquale09reson}. 
In Section~\ref{sect-Boundary} we prove that the Poisson kernel can be written as a boundary value of the resolvent kernel,
Corollary~\ref{cor-poisson-is-by-of-resolvent}.
The technical heart of this result is Lemma~\ref{lemma-bv-def-and-limit} on the existence of boundary value maps,
for which we give a self-contained proof in Appendix~\ref{appx-bvmap}. Moreover, Section~\ref{sect-Boundary} contains the new proof of Helgason's conjecture in rank one mentioned above. See Theorem~\ref{Thm-KKMOOT} for the precise formulation.
In Section~\ref{sect-Scatt} we define the scattering matrix as an operator relating different boundary values.
We give formulae relating the scattering matrix, the resolvent, the Poisson transform, the Harish-Chandra $\cfcn$-function,
and the standard Knapp-Stein intertwiner.
Theorem~\ref{thm-scatt-poles-are-resonances}, which is our main result, states that a scattering pole
is either a resonance or a pole of the standard intertwiner, and that the resolvent residue is mapped isomorphically onto the corresponding residue of the scattering operator by the boundary value map.

\bigskip

\noindent{\textbf{Acknowlegdements.}} We thank David Borthwick, Colin Guillarmou, Angela Pasquale and Tobias Weich for helpful conversations on the subject of this article.

\section{Poisson Transformation}
\label{sect-Poisson}

Let $X=G/K$ be a Riemannian symmetric space of noncompact type with isometry group $G$,
$G$ a connected semisimple Lie group with finite centre, and isotropy subgroup $K$ which is maximal compact.
Denote the algebra of invariant differential operators on $X$ by $\D(X)$.
The joint eigenspace associated with a character $\chi:\D(X)\to\C$
is the space $\E_\chi(X)$ consisting of functions $u$ which satisfy $Du=\chi(D)u$, $D\in\D(X)$.
Since $\D(X)$ contains an elliptic operator, the Laplacian $\laplaceop$,
$\E_\chi(X)$ is a subspace of the space $\Comega(X)$ of real analytic functions on $X$.
Left translation by elements of $G$ defines the eigenspace representation $T_\lambda$ of $G$ on $\E_\chi(X)$.
Denote by $d_X(x)$ the distance from $x\in X$ to the origin $o=K\in X$.

Fix an Iwasawa decomposition $G=KAN$.
Thus the following data are chosen:
A Cartan decomposition $\lie{g}=\lie{k}+\lie{p}$ of the Lie algebra of $G$,
a maximal abelian subalgebra $\a$ of $\lie p$,
and a positive system $\Sigma^+\subset\a^*$ of the set of restricted roots.
Denote by $\rho\in\a^*$ the weighted half-sum of positive roots.
The Iwasawa projections $\kappa:G\to K$ and $H:G\to \a$ are given by $g\in \kappa(g)\exp(H(g)) N$.

Let $M$ denote the centralizer in $K$ of the Lie algebra $\a$ of $A$.
The compact space $B=K/M$ is called the Furstenberg boundary of $X$.
Let $\a^+$ be the positive Weyl chamber in $\a$ determined by $\Sigma^+$, and 
$\a^*_+$ the dual cone in $\a^*$ of $\a^+$.
Set $A^+=\exp\a^+$.
By the $KAK$ decomposition, $G=KAK$, we have the diffeomorphism $(kM,a)\mapsto ka\cdot o$
from $B\times A^+$ onto the open dense subset $X'=KA^+\cdot o$ of $X$.

Let $W$ denote the Weyl group for the restricted roots, and $\aC$ the complexified dual of $\a$.
The Harish-Chandra isomorphism is an algebra isomorphism  $\Gamma$ from $\D(X)$
onto the algebra of $W$-invariant polynomials on $\aC$.
Every character $\chi$ of $\D(X)$ is of the form $\chi(D)=\Gamma(D)(\lambda)$ for some $\lambda\in\aC$.
We then set $\E_\lambda(X)=\E_\chi(X)$.
Notice that $\E_{w\cdot\lambda}(X)= \E_{\lambda}(X)$ for $w\in W$.

The homogeneous space $G/P$, $P=MAN$ minimal parabolic,
is diffeomorphic to $B$ by the map $kM\mapsto kP$, $k\in K$.
This defines a $G$-action on $B$, $g\cdot kM=\kappa(gk)M$.

We set $a^\mu=e^{\mu(\log a)}$ if $a\in A$, $\mu\in\aC$.
Every $\lambda$ in $\aC$ defines a character of $P$, $man\mapsto a^{\rho-\lambda}$.
Denote by $L_\lambda\to G/P$ the associated $G$-homogeneous line bundle.
Inducing from $P$ to $G$, the spherical principal series representation $\pi_\lambda$ of $G$
in the space $\Aprime(G/P; L_\lambda)$ of analytic functionals (hyperfunctions) is obtained.
Sections are canonically identified with complex-valued functions $\varphi$ on $G$
which satisfy $\varphi(gman)=a^{\lambda-\rho}\varphi(g)$ for $g\in G$, $man\in P$.
This gives the induced picture of $\pi_\lambda$ in the sense of \cite[Ch.~\RN 7 \S 1]{Knapp86repexa}.
Restriction of $\varphi$ to $K$ gives an isomorphism of locally convex spaces,
$\Aprime(G/P; L_\lambda) \equiv \Aprime(B)$.
The inverse assigns to a function $f$ on $B$ the section $\varphi$ with
$\varphi(kan)=a^{\lambda-\rho}f(kM)$, $a\in A$ and $n\in N$.
This isomorphism transfers $\pi_\lambda$ from the induced to the compact picture,
where the representation space of $\pi_\lambda$ is $\Aprime(B)$:
\begin{equation}
\label{eq-spher-rep-on-Aprime-B}
(\pi_\lambda(g) f)(kM) = e^{-(\rho-\lambda)(H(g^{-1}k))} f(\kappa(g^{-1}k)M).
\end{equation}
See \cite[Ch.~\RN 7 \S 1]{Knapp86repexa}, and \cite[Ch.~\RN 6 \S 3 (13)]{Helgason94GASS}.

The Poisson transformation $P_\lambda$ with spectral parameter $\lambda\in\aC$ is a $G$-homomorphism
which intertwines the spherical principal series representation and the eigenspace representation:
\[
P_\lambda: \Aprime(G/P; L_\lambda) \to \E_\lambda(X),
\quad
(P_\lambda f)(gK) = \int_K f(gk)\intd k.
\]
Alternatively, in the compact picture of $\pi_\lambda$, $P_\lambda:\Aprime(B)\to \E_\lambda(X)$,
\begin{equation}
\label{def-Poisson-transform}
(P_\lambda f)(x)= \int_B e^{(\rho+\lambda)(A(x,b))} f(b)\intd b.
\end{equation}
Here $A$ denotes the horocycle bracket $A(x,b)=-H(g^{-1}k)\in\a$ of $x=gK\in X$, $b=kM\in B$.
The spherical function $\sfcn_\lambda =P_\lambda 1$ is the unique $K$-invariant
element of $\E_\lambda(X)$ which satisfies $\sfcn(o)=1$.
Moreover, $\sfcn_{w\cdot\lambda}=\sfcn_\lambda$ holds for $w\in W$ and $\lambda\in\aC$.
To ease notation, we write
\[ P_\lambda(x,b)=  e^{(\rho+\lambda)(A(x,b))} \]
for the Schwartz kernel of $P_\lambda$.

Write $\bar N$ for the image of the nilpotent subgroup $N$  under the Cartan involution.
The Harish-Chandra $\cfcn$-function is the meromorphic function on $\aC$ given by
$\cfcn(\lambda)=\int_{\bar N}e^{-(\lambda+\rho)(H(\bar n))}\intd \bar n$
when $\RE\lambda\in \a_+^*$.
It arises, for example, in the following Fatou-type theorem,
\cite[Ch.~\RN 2, Theorem 3.16]{Helgason94GASS}:
If $\RE\lambda\in \a_+^*$, $H\in\a^+$, and $f\in C(B)$, then
\begin{equation}
\label{Helgason-Fatou-thm}
\lim_{t\to\infty} a_t^{\rho-\lambda} (P_\lambda f)(ka_t\cdot o)   = \cfcn(\lambda)f(kM),
\end{equation}
where $a_t=\exp(tH)$.
Moreover, the convergence is uniform on $B$.

Let $\{\alpha_1,\ldots,\alpha_\ell\}\subset \Sigma^+$ denote the simple positive roots,
$\ell$ the real rank of $X$.
Let $H_1,\ldots,H_\ell\in\a^+$ be the dual basis, that is $\alpha_j(H_k)=\delta_{jk}$.
We identify $\aC$ with $\C^\ell$ via the basis $\alpha_1,\ldots,\alpha_\ell$, so
$\zeta = \big(\zeta(H_1),\ldots, \zeta(H_\ell)\big) = \sum\nolimits _j \zeta_j\alpha_j$
in $\aC=\C^\ell$.
Moreover, following \cite{Kashiwara78eigen},
we identify $A^+$ with the open octant $]0,1[^\ell$ by the diffeomorphism
\[
O:=]0,1[^\ell \to A^+,\quad y=(y_1,\ldots,y_\ell)\mapsto a=\exp(\sum\nolimits _j t_jH_j),\quad y_j=e^{-t_j}.
\]
Using this, we identify $X'=KA^+\cdot o$ and $B\times O$ by
the real analytic diffeomorphism $ka\cdot o \equiv (kM,y)$.
If $s=(s_1,\ldots,s_\ell)\in\C^\ell$, then we write $y^s$ for the
function $(b,y)\mapsto y_1^{s_1}\cdots y_\ell^{s_\ell}$.

Let $g\in G$.
The map $x\mapsto g^{-1}\cdot x$ restricts to a diffeomorphism
from the open dense subset $X'\cap g\cdot X' \subset X$ onto $g^{-1}\cdot X'\cap X'$.
By Lemmas 4.1 and 4.2 of \cite{Kashiwara78eigen} the following holds:
There exists a real analytic diffeomorphism $\Phi_g:U\to U'$, $(b,y)\mapsto(b',y')$,
between open neighbourhoods $U,U'\subset B\times\widetilde O$, $\widetilde O:=]-1,1[^\ell$, of $B\times\{0\}$
such that $g^{-1}\cdot (b,y)=(b',y')$ for $(b,y)\in U\cap X'\cap g\cdot X'$, and
\begin{equation}
\label{Jacobian-yprime-y}
\Phi_g(kM,0) = (\kappa(g^{-1}k)M, 0), \quad
\frac{\partial y'}{\partial y}(b,0)
   =\diag\big(e^{\alpha_1(A(g\cdot o, b))}, \ldots, e^{\alpha_\ell(A(g\cdot o, b))}\big).
\end{equation}
Given $\mu=(\mu_1,\ldots,\mu_\ell)\in \C^\ell$ we have the line bundle $(N^*B)^\mu$,
where $N^*B\subset T^*(B\times\widetilde O)$ denotes the conormal bundle of $B\equiv B\times\{0\}$.
The bundle $(N^*B)^\mu$ is trivialized by the global section $(\intd y_1)^{\mu_1}\cdots(\intd y_\ell)^{\mu_\ell}$.
It follows from \eqref{eq-spher-rep-on-Aprime-B} and \eqref{Jacobian-yprime-y} that
$(N^*B)^{\rho-\lambda}$ is, as a $G$-equivariant line bundle, isomorphic to $L_\lambda$,
when the latter is pulled back from $G/P$ to $B$.
See \cite[Lemma 5.2.1]{Schlichtkrull84hyperf}.
The $G$-isomorphisms
\begin{equation*}
\Aprime(G/P;L_{\lambda})\equiv\Aprime(B;(N^*B)^{\rho-\lambda})\equiv\Aprime(B)
\end{equation*}
relate different realizations of the spherical principal series representation.
Each realization is intertwined with the eigenspace representation by the Poisson transformation.

We shall work in a distributional setting rather than with hyperfunctions.
If $f\in\Dprime(B)$ is a distribution, then $u=P_\lambda f$ is of weak moderate growth,
which means that $ue^{-rd_X}$ is a bounded function for some $r>0$, \cite[Section~I.2]{BanSchlichtkrull87}.
Equivalently, there exists $\nu\in\N^\ell$ such that $y^\nu u$ is bounded on $B\times O$;
see \cite[Lemma 2.1 (\Rn{3}), (\Rn{4})]{BanSchlichtkrull87}.
In this paper, we handle the growth condition using Sobolev spaces of negative order.

The range of the restriction map $\Dprime(B\times\widetilde O)\to\Dprime(B\times O)$
is, by definition, the space $\Dprimebar(B\times O)$ of extendible distributions.
\begin{lemma}
Functions of weak moderate growth are extendible distributions.
\end{lemma}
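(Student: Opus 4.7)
\medskip

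\noindent\textbf{Proof plan.} Given $u$ of weak moderate growth, the characterization recalled just before the lemma provides $\nu \in \N^\ell$ such that $v := y^\nu u \in L^\infty(B\times O)$. Extending $v$ by zero across $\widetilde O\setminus O$ produces $\tilde v \in L^\infty(B\times \widetilde O)$, which is in particular a distribution on $B\times\widetilde O$. The task then reduces to producing some $\tilde u \in \Dprime(B\times\widetilde O)$ satisfying $y^\nu \tilde u = \tilde v$: on $B\times O$ the monomial $y^\nu = y_1^{\nu_1}\cdots y_\ell^{\nu_\ell}$ is nowhere zero, so this equation forces $\tilde u|_{B\times O} = y^{-\nu}\tilde v|_{B\times O} = u$, exhibiting $u$ as the restriction of a distribution on $B\times\widetilde O$ and hence placing it in $\Dprimebar(B\times O)$.

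To solve $y^\nu \tilde u = \tilde v$ I would iterate the one-variable division of distributions by a coordinate function. It suffices to show that, for each $j=1,\dots,\ell$, the operator of multiplication by $y_j$ is surjective on $\Dprime(B\times\widetilde O)$; applying this $\nu_j$ times in the $j$-th coordinate and cycling through all $j$ then yields the desired preimage under $M_{y^\nu}$. The one-variable fact itself is classical: on $\Dprime(\widetilde O_j)$, multiplication by the coordinate $y_j$ is surjective, with kernel spanned by $\delta(y_j)$ (see e.g.\ H\"ormander, \emph{The Analysis of Linear Partial Differential Operators I}, Theorem~3.1.12). A right inverse $E_j$ can be constructed explicitly by pairing with the test function $\phi \mapsto \int (\phi(y_j,\cdot) - \chi(y_j)\phi(0,\cdot))/y_j$, for a fixed cutoff $\chi$ equal to $1$ near $0$.

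The main obstacle is the routine but not quite automatic verification that this construction extends to distributions on the product $B\times\widetilde O$, i.e.\ that $E_j$ acts continuously on $\Dprime(B\times\widetilde O)$ rather than just on $\Dprime(\widetilde O_j)$. Because $E_j$ is built purely from operations in the $y_j$-variable, after a local coordinate patch on $B$ one may use Schwartz kernels and a partition of unity to lift the construction with $B$-parameters. Given this, the remainder of the argument is bookkeeping: iterate to obtain $\tilde u$ with $y^\nu \tilde u = \tilde v$ and restrict to $B\times O$ to recover $u$.
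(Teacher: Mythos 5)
Your argument is correct but proceeds along a genuinely different route from the paper. The paper stays entirely within Sobolev-space duality: it shows that multiplication by $y^{-\nu}$ maps $\SobDot{m}(B\times\bar O)$ continuously into $\SobDot{m'}(B\times\bar O)$ (using the Sobolev embedding into $\dot C^k$ and Taylor's formula), and then dualizes to conclude that $y^{-\nu}\Sob{-m'}(B\times O)\subset\Sob{-m}(B\times O)$, so a weak-moderate-growth function in fact lies in $\Sob{-m}(B\times O)$ for some $m$. This is quantitatively sharper than mere extendibility, and that sharper conclusion is what is actually used immediately afterwards, when $\Et_\lambda(X)$ is written as $\cup_m\big(\E_\lambda(X)\cap\Sob{-m}(B\times O)\big)$ and given its (DFS)-structure. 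Your approach instead uses the classical surjectivity of multiplication by a coordinate on $\Dprime$ (iterated Hadamard-type division), which is more elementary in the sense that it avoids Sobolev embedding and duality, but only delivers the qualitative statement that an extension exists in $\Dprime(B\times\widetilde O)$; to recover the Sobolev membership the paper wants, you would still need an additional argument (for instance, noting that any extendible distribution on the relatively compact set $B\times O$ lies in $\Sob{-m}(B\times O)$ for some $m$, by the structure theorem for compactly supported distributions after cutting off the extension). Two small points to make your sketch airtight: when iterating the division across charts of $B$ you do not need compatibility of the right inverses on overlaps, since any single solution of $y^\nu\tilde u=\tilde v$ suffices, so a partition of unity $\sum_k\chi_k=1$ on $B$ and summing the chart-wise solutions is enough; and you should note that multiplication by $y^\nu$ is injective on $\Dprime(B\times O)$ because $y^\nu>0$ there, which is what lets you conclude $\tilde u|_{B\times O}=u$ from $y^\nu\tilde u|_{B\times O}=y^\nu u$.
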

\begin{proof}
Consider the manifold $B\times \R^\ell$ with its product Riemannian structure,
not to be confused with the Riemannian structure induced from $X$.
The set $B\times O$ is a relatively compact open subset of
$B\times \R^\ell$, and it has a Lipschitz boundary.
We denote by $\Sob{m}(B\times O)\subset\Dprimebar(B\times O)$ the usual Sobolev space of order $m\in\Z$.
In particular, $\Sob{0}(B\times O)=L^2(B\times O)$.
Furthermore, we denote by $\SobDot{m}(B\times \bar O)$ the subspace of $\Sob{m}(B\times\R^\ell)$
which consists of elements supported in $B\times\bar O$, $\bar O$ the closure of $O\subset\R^\ell$.
If $m\geq 0$ then $\Ccinfty(B\times O)$ is dense in $\SobDot{m}(B\times \bar O)$,
and $\SobDot{m}(B\times \bar O)\subset\Sob{m}(B\times O)$.
Recall that $\Sob{-m}(B\times O)$ is the dual of $\SobDot{m}(B\times \bar O)$.
If $0\leq k<m-\dim(X)/2$, then, by the Sobolev Lemma,
$\SobDot{m}(B\times \bar O)\subset \dot C^k(B\times\bar O)$,
the space of $C^k$-functions in $B\times\R^\ell$ which are supported in $B\times \bar O$.
By Taylor's formula, if $\nu\in\N^\ell$, $|\nu|\leq k$, then
$\dot C^k(B\times\bar O)\subset y^{\nu}\dot C^{k-|\nu|}(B\times\bar O)$.
It follows that multiplication by $y^{-\nu}$ maps 
$\SobDot{m}(B\times \bar O)$ continuously into $\SobDot{m'}(B\times \bar O)$,
and, dually,
\[ y^{-\nu}\Sob{-m'}(B\times O)\subset \Sob{-m}(B\times O) \quad\text{if $0\leq m'<m-\dim(X)/2-|\nu|$.} \]
Therefore, if $u$ is of weak moderate growth, then
$u\in\Sob{-m}(B\times O)$ for some $m\in\N$.
\end{proof}

Observe that $\E_\lambda(X)\cap\Sob{-m}(B\times O)$ is a closed subspace of
$\Sob{-m}(B\times O)$ because differential operators are continuous maps into the space of distributions.
The space $\Dprime(B)=\cup_m \Sob{-m}(B)$, and the space
\[
\Et_\lambda(X) = \E_\lambda(X)\cap \Dprimebar(B\times O)
    = \cup_{m\in\N} \big(\E_\lambda(X)\cap\Sob{-m}(B\times O)\big);
\]
of tempered (weak moderate growth) joint eigenfunctions are (DFS)-spaces,
that is they are strong duals of Fr\'echet--Schwartz spaces;
\cite[Prop.~25.20]{MeiseVogt97fa}.
The Poisson transform is a continuous linear operator $P_\lambda:\Dprime(B)\to\Et_\lambda(X)$.
We shall regard $P_\lambda$ also, by composing it with inclusion $\Et_\lambda(X)\subset\Cinfty(X)$,
as a map into $\Cinfty(X)$.
For every $m\in\N$ and every bounded domain $\Lambda\subset\aC$ there exists $m'\in\N$
such that $\Lambda\ni\lambda\to P_\lambda$ is a holomorphic map into the
Banach space $\LinOp\big(\Sob{-m}(B), \Sob{-m'}(B\times O)\big)$ of bounded linear operators.

\section{Resolvent}
\label{sect-resolvent}

From now on we assume that $X$ has rank one, $\ell=1$.
Then $\D(X)$ is generated by the Laplace operator $\laplaceop$ which is positive,
in fact, $\laplaceop\geq\ksq{\rho}$.
Since $X$ is a complete Riemannian manifold, $\laplaceop$ is essentially selfadjoint on $\Ccinfty(X)$.
The unique selfadjoint extension is also denoted $\laplaceop$.
By spectral theory, the (modified) resolvent
\begin{equation*}
R_\zeta = (\laplaceop-\ksq{\rho}-\ksq{\zeta})^{-1}
\end{equation*}
is a holomorphic map from the (physical) half-plane $\IM\zeta<0$
into the space of bounded linear operators on $L^2(X)$.
Here, the angular brackets denote the Killing form.
In \cite{MiatelloWallach92resolv} a continuation of $R_\zeta$ as a meromorphic
function into the space of linear operator from $L^2_c(X)$ into $\Lloc^2(X)$ is constructed.
Below, we revisit, and in part simplify, this construction.
See \cite{HilgertPasquale09reson} for a different approach.
The meromorphically continued resolvent will also be denoted $R_\zeta$,
and we also call it the resolvent.

The set $\Sigma^+$ of positive roots equals $\{\alpha\}$ or $\{\alpha,2\alpha\}$.
The multiplicities of the roots are $m_\alpha>0$ and $m_{2\alpha}$;
the latter is set to zero if $2\alpha\notin\Sigma^+$.
The dimension of the symmetric space $X$ is $\dim X=1+m_\alpha+m_{2\alpha}$.
Fix $H\in\a^+$ such that $\alpha(H)=1$.
The map
$\zeta\mapsto \zeta(H) = \langle \zeta,\alpha\rangle/\ksq{\alpha}$,
and its inverse $z\mapsto z\alpha$ identify $\aC$ with $\C$.
So $\ksq{\zeta}=\ksq{\alpha} \zeta^2$ and $\rho=(m_\alpha+2m_{2\alpha})/2\in\R$.
Set $\|\alpha\|= \ksq{\alpha}^{1/2}$.
Notice that $\|\alpha\| H$ is a unit vector, and that $A_\alpha=\ksq{\alpha} H$ is the vector in $\a$
which corresponds to $\alpha$ under the identification $\a^*\equiv \a$ defined by the Killing form.
Except for the duality bracket and the Killing form, operations with elements in $\aC$ are done in $\C$.
The Weyl group $W$ consists of the elements $\pm 1$.

The maps $t\mapsto tH$ and $t\mapsto \exp tH\cdot o$ identify $\R$ with $\a$
and $\R_+$ with $A^+\cdot o$, respectively.
Introducing the variable $y=e^{-t}$ we identify $A^+$ with the open unit interval $]0,1[$,
and therefore,
\begin{equation}
\label{X-ident-KM-unitinterval}
X'\equiv B\times ]0,1[, \quad x=ka\cdot o \equiv (kM,y)=(b,y), \quad \log a = -(\log y)H,
\end{equation}
where $X'=KA^+\cdot o=X\setminus\{o\}$.
The coordinate $y$ is a defining function of the boundary $B$
in the boundary collar $B\times]-1,1[$.
We identify distributions and differential operators on $X'$
with their pullbacks to $B\times]0,1[$.
Set
\begin{equation*}
J(y) =y^{-2\rho}(1-y^2)^{m_\alpha}(1-y^4)^{m_{2\alpha}}
   = (2\sinh t)^{m_\alpha} (2\sinh 2t)^{m_{2\alpha}}, \quad y=e^{-t}.
\end{equation*}
The measures $\intd x$ on $X$ and $\intd b$ on $B$ are related as follows:
\begin{equation} 
\label{eq-measure-dx} 
\intd x = J(y) y^{-1}\intd y \intd b
\end{equation} 
on $B\times]0,1[$; see \cite[Ch.~\RN 1, Theorem~5.8]{Helgason84GGA}.
Pullback by the identification map \eqref{X-ident-KM-unitinterval} gives a unitary isomorphism
\[ L^2(X)\equiv L^2(B\times]0,1[;\, y^{-1}J(y)\intd b\intd y). \]

A $K$-invariant function $f$, $f(k\cdot x)=f(x)$ for $k\in K$ and $x\in X$,
is uniquely determined by its restriction to $A^+\cdot o$.
Abusing notation we write $f(t)=f(\exp(tH)\cdot o)$ if $f$ is $K$-invariant.
Then $f(ka\cdot o)=f(t)$ holds for all $k\in K$, $a=\exp(tH)\in A^+$.
Under the identification \eqref{X-ident-KM-unitinterval} a function is
$K$-invariant if and only if it is a function of the variable $y$ only.

In \cite{MiatelloWallach92resolv} the resolvent of the Laplacian is in effect
constructed as a meromorphic family of convolution operators given by
locally integrable $K$-invariant functions.
The convolution product $u\times v$ of distributions $u,v\in\Dprime(X)$,
one of which is compactly supported, is defined via convolution on the group $G$.
See \cite[Ch.~\RN 2, \S 5.1.]{Helgason84GGA},
where also the following formulae can be found:
\begin{equation}
\label{convolution-facts}
u\times \delta_o=u,\quad \laplaceop(u\times v) =u\times(\laplaceop v),
\end{equation}
$\delta_o$ the Dirac distribution at the origin $o$.
If $u\in C_c(X)$, and if $v\in \Lloc^1(X)$ is $K$-invariant, then
\begin{equation}
\label{eq-convolution-X}
(u\times v)(x)=\int_G u(g\cdot o)v(g^{-1}\cdot x)\intd g
   =\int_X u(g\cdot o)v(g^{-1}\cdot x)\intd gK
\end{equation}
holds for $x\in X$.
Here the $K$-invariance of $v$ is used to prove the last equality.
Conjugation with a representative of the non-trivial Weyl group element $-1$ implies that $a^{-1}\in KaK$ if $a\in A$.
Hence $v(g^{-1}\cdot o)=v(g\cdot o)$, $g\in G$, holds as $v$ is $K$-invariant.
Notice that this implies that the Schwartz kernel of the convolution operator
$C_v:u\mapsto u\times v$ is symmetric in its arguments,
$C_v(x_2,x_1)=C_v(x_1,x_2)= v(x)$, where
$x_j=g_j\cdot o\equiv (b_j,y_j)$ and $x=g_1^{-1}g_2\cdot o\equiv (b,y)$.
It follows from \cite[Lemma 2.1]{BanSchlichtkrull87} that $y^{-1}\leq (y_1y_2)^{-1}$.
Therefore,
\[ \sup_{y_1\geq \eps}\int_{y_2\geq \eps}|C_v(x_1,x_2)|\intd x_2\leq \int_{y\geq\eps^2}|v(x)|\intd x<\infty. \]
By the Schur--Young inequality the left-hand side is a bound for the operator norm on $L^2(X)$
of $1_\eps C_v 1_\eps$,
where $1_\eps$ denotes multiplication by the indicator function of the compact subset $\{y\geq \eps\}\subset X$.
In particular, convolution by $v$ maps $L_c^2(X)$ continuously into $\Lloc^2(X)$.

It is convenient to use the abbreviation
\[ L:=\ksq{\alpha}^{-1}\laplaceop. \]
Then the eigenvalue equation defining $\E_\lambda(X)$ reads $(L-\rho^2+\lambda^2)u=0$.
When restricted to $K$-invariant (radial) functions on $X'$,
$L$ acts as an ordinary differential operator:
\begin{equation}
\label{radial-laplacian-L}
L|_{\operatorname{rad}}
    = -\partial_t^2 -(\partial_t \log J(y)) \partial_t
    = -\theta^2 -(\theta \log J) \theta, \quad \theta= y\partial_y,
\end{equation}
$0<y=e^{-t}<1$.
See \cite[Ch.~\RN 2, Proposition~3.9]{Helgason84GGA}.
For a $K$-invariant function $q(t)$, the eigenvalue equation $(L-\rho^2+\lambda^2)q=0$
in $X'$ is equivalent to the ordinary differential equation 
\begin{equation}
\label{eq-radDelta-u}
\ddot q + (\log J(y))^\cdot \dot q +(\rho^2-\lambda^2)q=0, \quad t>0. 
\end{equation}
A dot denotes one derivative with respect to $t$.
The point $t=0$ is a weakly singular point of \eqref{eq-radDelta-u};
the roots of the characteristic polynomial are $0$ and $1-m_\alpha-m_{2\alpha}\leq 0$.
The spherical function $\sfcn_\lambda$ is the unique solution of 
\eqref{eq-radDelta-u} which is continuous up to $t=0$ and satisfies $\sfcn_\lambda(0)=1$.
There exist functions $h_{j,\lambda}(t)$, $j=1,2$, which are real analytic in a neighbourhood of $t=0$, such that $\sfcn_\lambda(t)$ and
\[ \psi_\lambda(t)=t^{1-m_\alpha-m_{2\alpha}}\big(h_{1,\lambda}(t)+(\log t)h_{2,\lambda}(t)\big)\]
are linearly independent solutions of \eqref{eq-radDelta-u}.
See, for example, \cite[\S 24 \RN 8]{Walter98ODE}.
Observe that $\sfcn_\lambda$ and $\psi_\lambda$ are, as $K$-invariant functions on $X$, locally integrable.

In \cite{MiatelloWallach92resolv} it is shown that,
for every $\lambda\in\C$ such that $2\lambda$ is not a negative integer,
there is a locally integrable $K$-invariant solution $Q_{\lambda}$ of 
\begin{equation}
\label{fundamental-solution-Q}
\big(L-\rho^2+\lambda^2\big) Q_{\lambda} = 2\lambda \cfcn(\lambda)\delta_o.
\end{equation}
In view of \eqref{convolution-facts}, this implies,
if $f\in C_c(X)$ and $\IM\zeta<0$,
\begin{equation}
\label{Rz-as-convolution}
R_\zeta f = \big(2i\langle\zeta,\alpha\rangle \cfcn(i\zeta)\big)^{-1} f\times Q_{i\zeta}.
\end{equation}
For completeness and reference, we summarize in the following lemma the essential
properties of $Q_\lambda$ which were established in \cite{MiatelloWallach92resolv}.
We also give some simplications of the proofs.
\begin{lemma}
\label{lemma-Q-fn}
Suppose $2\lambda$ is not a negative integer.
The equation \eqref{eq-radDelta-u} has a unique solution $Q_\lambda=q$ such that
\begin{equation}
\label{eq-Q-def}
Q_{\lambda}(t) = y^{\rho+\lambda} h_\lambda(y), \quad h_\lambda(0)=1, \quad y=e^{-t},
\end{equation}
and $h_\lambda(y)$ is real analytic in neighbourhood of $y=0$.
For every compact subset $C\subset X$, $\lambda\mapsto Q_\lambda|_C$
maps $\C\setminus -\halb\N$ holomorphically into $L^1(C)$.
Moreover, $Q_\lambda$ solves \eqref{fundamental-solution-Q}.
\end{lemma}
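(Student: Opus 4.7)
The plan is to construct $Q_\lambda$ via Frobenius's method applied to the radial ODE \eqref{eq-radDelta-u}, extend it globally as a radial solution, and identify the delta source in \eqref{fundamental-solution-Q} by a Wronskian computation.

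First I would substitute the ansatz $q(t)=y^{\rho+\lambda}h(y)$, $y=e^{-t}$, into the equation $(L-\rho^2+\lambda^2)q=0$. Using $\theta=y\partial_y=-\partial_t$, the expression \eqref{radial-laplacian-L} for $L|_{\operatorname{rad}}$, and the identity $\theta y^s=sy^s$, a direct computation reduces the equation to a Fuchsian form
\[
\theta(\theta+2\lambda)h \;=\; y^2K(y)(\theta+\rho+\lambda)h,
\]
where $K$ is real analytic on $|y|<1$ (originating from the smooth part of $\theta\log J$). The indicial exponents at $y=0$ are $0$ and $-2\lambda$. Setting $h=\sum_{n\ge 0}a_n(\lambda)y^{2n}$ with $a_0=1$ yields a recursion $4n(n+\lambda)\,a_n=$ polynomial in $a_0,\ldots,a_{n-1}$ with coefficients holomorphic in $\lambda$. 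This is solvable for $\lambda\notin\{-1,-2,\ldots\}$, and standard majorant estimates show that the series converges on $|y|<1$ locally uniformly in $\lambda\in\C\setminus-\halb\N$. Hence $h_\lambda$ exists, is real analytic in $y$, depends holomorphically on $\lambda$, and the normalization $h_\lambda(0)=1$ pins down a unique $Q_\lambda$ with the asymptotic profile \eqref{eq-Q-def}.

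Next I would extend $Q_\lambda$ from a punctured neighbourhood of $y=0$ as a solution of \eqref{eq-radDelta-u} on all of $t\in(0,\infty)$ and verify local integrability on $X$. Near $t=0$ one writes $Q_\lambda=A(\lambda)\sfcn_\lambda+B(\lambda)\psi_\lambda$ with $A,B$ holomorphic in $\lambda$, so the worst singularity of $Q_\lambda$ is the $t^{1-m_\alpha-m_{2\alpha}}$ (possibly times $\log t$) carried by $\psi_\lambda$. Since the Riemannian volume near $o$ has the radial factor $J(t)\,dt\sim t^{m_\alpha+m_{2\alpha}}\,dt$, the product $|Q_\lambda|\,J$ is integrable near $o$, giving $Q_\lambda\in\Lloc^1(X)$. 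Holomorphy of $\lambda\mapsto Q_\lambda|_C$ into $L^1(C)$ for any compact $C\subset X$ is then inherited from the Frobenius construction together with the holomorphic dependence of $\sfcn_\lambda,\psi_\lambda$ on $\lambda$.

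Finally, I would derive \eqref{fundamental-solution-Q}. By construction $(L-\rho^2+\lambda^2)Q_\lambda=0$ away from $o$, so $u_\lambda:=(L-\rho^2+\lambda^2)Q_\lambda$ is a $K$-invariant distribution supported at $\{o\}$. Testing against $K$-invariant $f\in\Ccinfty(X)$ (to which the general case reduces by $K$-averaging), using $dx=J(t)\,dt\,db$ with $\int_B db=1$, and the Green-type identity
\[
J\bigl[(Lu)v-u(Lv)\bigr] \;=\; \partial_t\bigl(JW[u,v]\bigr),\qquad W[u,v]=u\dot v-\dot u v,
\]
coming from the self-adjoint form $L|_{\operatorname{rad}}=-J^{-1}\partial_t(J\partial_t\,\cdot\,)$, integration by parts on $(\varepsilon,\infty)$ together with the homogeneous equation yields
\[
\langle u_\lambda,f\rangle \;=\; \lim_{\varepsilon\to 0} J(\varepsilon)\,W[Q_\lambda,f](\varepsilon).
\]
Plugging in $Q_\lambda=A\sfcn_\lambda+B\psi_\lambda$ near $t=0$ shows only the $\psi_\lambda$ part survives, and the limit equals an explicit constant times $B(\lambda)f(o)$. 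To identify $B(\lambda)$ I would exploit Abel's identity: $JW[\sfcn_\lambda,Q_\lambda]$ is constant in $t$, and its value can be read off as $t\to\infty$ from the Harish-Chandra asymptotic $\sfcn_\lambda\sim\cfcn(\lambda)y^{\rho-\lambda}+\cfcn(-\lambda)y^{\rho+\lambda}$ (which follows from \eqref{Helgason-Fatou-thm} and its Weyl reflection) together with $Q_\lambda\sim y^{\rho+\lambda}$, giving $JW[\sfcn_\lambda,Q_\lambda]=-2\lambda\,\cfcn(\lambda)$. Matching this with the $t\to 0$ expansion yields $B(\lambda)$, and substitution produces exactly the coefficient $2\lambda\,\cfcn(\lambda)$ on the right-hand side of \eqref{fundamental-solution-Q}. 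The main obstacle is the bookkeeping of normalizations (volume of $B$, the leading coefficient of $J(t)/t^{m_\alpha+m_{2\alpha}}$ as $t\to 0$, and sign conventions for $W$), and handling the exceptional cases in which $\psi_\lambda$ carries a genuine $\log t$ factor; in those cases the logarithm does not contribute to the leading term of $JW[\sfcn_\lambda,\psi_\lambda]$, so the argument goes through verbatim.
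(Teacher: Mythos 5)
Your proposal is correct and takes essentially the same route as the paper: the existence, uniqueness, analyticity, and holomorphic dependence of $Q_\lambda$ come from a Frobenius-type argument at the weakly singular point $y=0$, the local integrability comes from expanding in the fundamental system $(\sfcn_\lambda,\psi_\lambda)$ near $t=0$, and the constant $2\lambda\cfcn(\lambda)$ in \eqref{fundamental-solution-Q} is identified by Green's formula together with the Wronskian/Abel computation. The paper itself cites \cite[Lemmas 1.3, 2.2]{MiatelloWallach92resolv} for this last step, whereas you supply the details (via the constancy of $JW[\sfcn_\lambda,Q_\lambda]$ evaluated at $t\to\infty$); this amounts to reproducing the cited argument rather than replacing it.
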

\begin{proof}
Changing variables, $w(y)=q(t)$ with $y=e^{-t}$, \eqref{eq-radDelta-u} becomes
\begin{equation}
\label{eq-radLapl-wy}
\theta^2 w+(\theta J)\theta w+(\rho^2-\lambda^2)w=0, \quad \theta =y\frac{\intd}{\intd y}, \quad 0<y<1.
\end{equation}
The origin $y=0$ is a weakly singular point.
The roots of the characteristic polynomial are $\rho\pm\lambda$.
The assumption implies that the $\rho+\lambda-j$ for $j=1,2,3,\ldots$ are no roots.
A convergence theorem for formal solutions at weakly singular points,
e.g.\ \cite[\S 24 \RN 3]{Walter98ODE}, implies that there exists a function $h$
which is holomorphic in a neighbourhood of $y=0$, $h(0)=1$,
such that $w(y) = y^{\rho+\lambda} h(y)$ is a solution \eqref{eq-radLapl-wy}.
Then $Q_\lambda(t)=w(e^{-t})$ satisfies \eqref{eq-Q-def}.
Since the formal power of $h$ is uniquely determined, the uniqueness of $Q_\lambda$ is clear.
The existence proof in \cite[\S 24]{Walter98ODE} applies the Banach contraction mapping principle,
exhibiting $h(y)$ locally as a uniform limit of holomorphic functions of $y$
and of the parameter $\lambda$.
Therefore, $h_\lambda=h$ and $Q_{\lambda}$ depend holomorphically on the parameter $\lambda$.
Similarly, one shows that $\sfcn_\lambda(t)$ and $h_{j,\lambda}(t)$ depend
holomorphically on $\lambda$.
Expressing $Q_\lambda$ in terms of the fundamental system $(\sfcn_\lambda,\psi_\lambda)$,
we find that, for every $0<T<\infty$, $\lambda\mapsto Q_\lambda|_{[0,T]}$
is a holomorphic map into the weighted $L^1$-space $L^1([0,T];J(y)\intd t)$.

The proof of \eqref{fundamental-solution-Q} given in \cite[Lemma 2.2]{MiatelloWallach92resolv}
uses Green's formula, $K$-invariance, and the key limit formula
\[ \lim_{t\to 0} J(e^{-t})\dot Q_\lambda(t)= -2\lambda\cfcn(\lambda), \]
derived in \cite[Lemma 1.3]{MiatelloWallach92resolv}.
We refer to \cite{MiatelloWallach92resolv} for the detailed argument.
\end{proof}

It follows from the above that, for every $\chi\in\Ccinfty(X)$,
the cut-off resolvent $\chi R_\zeta \chi$ is a holomorphic
map into the space $\LinOp(L^2(X))$ of bounded operators on $L^2(X)$.
Furthermore, it follows from \eqref{eq-convolution-X} and \eqref{Rz-as-convolution}
that the resolvent kernel has the formula
\begin{equation}
\label{defn-Rz}
R_\zeta(g_1\cdot o,g_2\cdot o) = \frac{Q_{i\zeta}(g_1^{-1}g_2\cdot o)}{2i\langle\zeta,\alpha\rangle \cfcn(i\zeta)},
\quad
g_1^{-1}g_2\not\in K.
\end{equation}
Notice the following symmetry properties of the resolvent:
\begin{equation*}
R_\zeta=R_{-\bar\zeta}^*, \quad R'_\zeta = R_\zeta.
\end{equation*}
The first equality refers to the formal adjoint of the resolvent,
and it follows by analytic continuation
because it holds in $\IM\zeta<0$ by the self-adjointness of $\laplaceop$.
The second equality states, in terms the of the transpose (dual operator),
the symmetry of the resolvent kernel in its arguments,
and this holds because $R_\zeta$ is convolution by a $K$-invariant function.

The resolvent kernel admits an asymptotic expansion at the boundary:
\begin{proposition}
Assume that $2i\zeta$ is not a negative integer.
Then there is a real analytic function $r_\zeta$,
defined in an open neighbourhood of $X'\times B\times\{0\}$, such that
\begin{equation}
\label{R-asymptotics}
R_\zeta(x,x') = R_\zeta(x',x) = y^{\rho+i\zeta} r_{\zeta}(x',b,y), \quad
r_{\zeta}(x',b,0)= \frac{e^{(\rho+i\zeta)(A(x',b))}}{2i \langle\zeta,\alpha\rangle \cfcn(i\zeta)}.
\end{equation}
Here $x\equiv (b,y)$ under the identification \eqref{X-ident-KM-unitinterval}.
\end{proposition}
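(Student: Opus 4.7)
The idea is to combine the convolution formula \eqref{defn-Rz} with the Kashiwara--\=Oshima coordinate change \eqref{Jacobian-yprime-y}, specialized to $\ell=1$. Write $x=g_1\cdot o\equiv(b,y)$ near the boundary and $x'=g_2\cdot o\in X'$. By the symmetry of $R_\zeta$ and the $K$-invariance of $Q_\lambda$, formula \eqref{defn-Rz} reads $R_\zeta(x,x')=Q_{i\zeta}(g_2^{-1}g_1\cdot o)/(2i\langle\zeta,\alpha\rangle\cfcn(i\zeta))$ off the diagonal. The real analytic diffeomorphism $\Phi_{g_2}\colon(b,y)\mapsto(b',y')$ is defined on a neighbourhood of $B\times\{0\}$, and \eqref{Jacobian-yprime-y} supplies $y'(b,0)=0$ together with $(\partial y'/\partial y)(b,0)=e^{\alpha(A(x',b))}$. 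Thus I can write $y'=y\,\tilde r(b,y)$ with $\tilde r$ real analytic and $\tilde r(b,0)=e^{\alpha(A(x',b))}>0$, so $\tilde r>0$ in a neighbourhood of $B\times\{0\}$.

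Next I apply Lemma~\ref{lemma-Q-fn}: since $Q_{i\zeta}$ is $K$-invariant, its pullback to $X'\equiv B\times(0,1)$ depends only on its last coordinate, so $Q_{i\zeta}(g_2^{-1}g_1\cdot o)=(y')^{\rho+i\zeta}h_{i\zeta}(y')=y^{\rho+i\zeta}\tilde r(b,y)^{\rho+i\zeta}h_{i\zeta}(y\tilde r(b,y))$, where the fractional power uses the principal branch of the logarithm, legitimate by positivity of $\tilde r$. Setting $r_\zeta(x',b,y):=\tilde r(b,y)^{\rho+i\zeta}h_{i\zeta}(y\tilde r(b,y))/(2i\langle\zeta,\alpha\rangle\cfcn(i\zeta))$ gives the desired factorisation $R_\zeta(x,x')=y^{\rho+i\zeta}r_\zeta(x',b,y)$. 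Evaluating at $y=0$ and using the rank-one identification $\aC\equiv\C$ (so that $\rho+i\zeta\in\aC$ acts on $A\in\a$ as the scalar $\rho+i\zeta$ times $\alpha(A)$), one gets $\tilde r(b,0)^{\rho+i\zeta}=e^{(\rho+i\zeta)(A(x',b))}$, which together with $h_{i\zeta}(0)=1$ produces the claimed leading term.

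The main obstacle is joint real analyticity in $(x',b,y)$. Analyticity in $(b,y)$ for each fixed $x'$ is immediate from the Kashiwara--\=Oshima construction and Lemma~\ref{lemma-Q-fn}. Analyticity in $x'$ is most cleanly seen by noting that the quotient $R_\zeta(x,x')/y^{\rho+i\zeta}$ is, a priori, an intrinsic function of $(x',x)$ for $y>0$, hence independent of the choice of $g_2$ representing $x'$; combined with the real analytic dependence of $\Phi_g$ on $g$, inherited from the real analytic $G$-action on $X$, this extends $r_\zeta$ real analytically to a neighbourhood of $X'\times B\times\{0\}$. The symmetry $R_\zeta(x,x')=R_\zeta(x',x)$ is inherited directly from the symmetry of the convolution kernel noted before \eqref{Rz-as-convolution}.
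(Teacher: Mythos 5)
Your proposal is correct and follows essentially the same route as the paper's proof: apply the convolution formula \eqref{defn-Rz}, invoke the Kashiwara--{\=O}shima diffeomorphism \eqref{Jacobian-yprime-y} (specialized to $\ell=1$ as in \eqref{eq-Phi-g-at-B}) to write $y'=y\gamma_g(b,y)$ with positive analytic $\gamma_g$, and then feed this into the expansion \eqref{eq-Q-def} of $Q_{i\zeta}$. The only additions beyond the paper's argument are minor amplifications (noting that positivity of $\gamma_g$ makes the fractional power well defined, and spelling out why joint real analyticity in $x'$ holds), both of which are consistent with what the paper leaves implicit.
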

\begin{proof}
The leftmost equality in \eqref{R-asymptotics} restates the symmetry of the resolvent kernel.
Let $g\in G$.
By \eqref{Jacobian-yprime-y} there exists an analytic diffeomorphism $\Phi_g$
between open connected neighbourhoods of $y=0$ such that $\Phi_g(b,y)=(b',y')\equiv g^{-1}\cdot x$
if $x\equiv(b,y)$, $y>0$, and
\begin{equation}
\label{eq-Phi-g-at-B}
\Phi_g(kM,0) = (\kappa(g^{-1}k)M, 0), \quad
y' = y \gamma_g(b,y), \quad \gamma_g(b,0) = e^{\alpha(A(g\cdot o, b))}.
\end{equation}
The function $\gamma_g$ is real analytic and positive.
Let $x'=g\cdot o$ and $x\equiv (kM,y)$.
Then, using the formula~\eqref{defn-Rz} for the resolvent kernel and the asymptotics \eqref{eq-Q-def}, we have
\[
2i \langle\zeta,\alpha\rangle \cfcn(i\zeta) R_\zeta(x',x)
   = Q_{i\zeta}(g^{-1}\cdot x) = (y')^{\rho+i\zeta} h_{i\zeta}(y')).
\]
Now \eqref{R-asymptotics} follows using the second and the third equality of \eqref{eq-Phi-g-at-B}.
\end{proof}

Notice that $Q_{\lambda}$ and $Q_{-\lambda}$ are a fundamental system of
\eqref{eq-radDelta-u} if $2\lambda\not\in\Z$.
Hence the spherical function $\sfcn_{\lambda}=\sfcn_{-\lambda}$ is a linear
combination of $Q_{\lambda}$ and $Q_{-\lambda}$.
Consistent with \eqref{eq-Q-def} and the known spherical function expansion,
\[
\sfcn_{\pm \lambda} = \cfcn(-\lambda) y^{\rho+\lambda} (1+\bigoh(y)) +\cfcn(\lambda) y^{\rho-\lambda} (1+\bigoh(y))
\quad\text{as $y\downarrow 0$,}
\]
the formula
\begin{equation}
\label{eq-spher-fcn-Q-cfcn}
\sfcn_{\pm \lambda} = \cfcn(-\lambda)Q_{\lambda}+\cfcn(\lambda)Q_{-\lambda}
\end{equation}
holds as an equation of functions which are meromorphic in $\lambda$.

\section{Spectral Density}
\label{sect-spectral}

By the spectral theorem there is a unique projection-valued measure $E$,
the spectral measure of the shifted Laplacian:
\[ \laplaceop-\ksq{\rho} = \int_{-\infty}^\infty s \intd E(s). \]

\begin{proposition}
The spectral measure $E$ of $\laplaceop-\ksq{\rho}$ is absolutely continuous, supported in $[0,\infty[$,
and its density is $\intd s$--almost everywhere a difference of resolvent kernels:
\begin{equation}
\label{spec-density-is-resolvent-diff}
\intd E(s) = \big( R_{-\sqrt{s}/\|\alpha\|} -R_{\sqrt{s}/\|\alpha\|}\big) \frac{\intd s}{2\pi i},
\quad s>0.
\end{equation}
\end{proposition}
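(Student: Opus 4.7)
The plan is to apply Stone's formula to the self-adjoint operator $A:=\laplaceop-\ksq{\rho}\ge 0$ and to reinterpret the one-sided boundary limits of its $L^2$-resolvent in terms of the meromorphic continuation $R_\zeta$ from Section~\ref{sect-resolvent}. Since $A\ge 0$, the inclusion $\supp E\subset[0,\infty[$ is immediate, so only the density formula on $]0,\infty[$ requires work.

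For $f\in\Ccinfty(X)$ and $0<a<b$, Stone's formula reads
\begin{equation*}
\langle E(]a,b[)f,f\rangle + \tfrac12\langle E(\{a,b\})f,f\rangle
 = \lim_{\eps\downarrow 0}\frac{1}{2\pi i}\int_a^b \bigl\langle [(A-s-i\eps)^{-1}-(A-s+i\eps)^{-1}]f,f\bigr\rangle\intd s.
\end{equation*}
The map $\zeta\mapsto w=\ksq{\alpha}\zeta^2$ is a biholomorphism from $\{\IM\zeta<0\}$ onto $\C\setminus[0,\infty[$ intertwining $(A-w)^{-1}$ with $R_\zeta$. For $s>0$, the preimage of $w=s\pm i\eps$ in $\{\IM\zeta<0\}$ tends to $\mp\sqrt s/\|\alpha\|$ as $\eps\downarrow 0$. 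Hence the integrand converges pointwise in $s$ to $R_{-\sqrt s/\|\alpha\|}-R_{\sqrt s/\|\alpha\|}$ at every $s>0$ for which $\pm\sqrt s/\|\alpha\|$ are regular points of the meromorphic continuation.

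From \eqref{defn-Rz} and Lemma~\ref{lemma-Q-fn}, $\zeta\mapsto R_\zeta$ is a kernel-valued meromorphic function on $\C$ with at most a discrete set of real poles, coming from $\zeta=0$ and from zeros of $\cfcn(i\zeta)$. Taking $\chi\in\Ccinfty(X)$ with $\chi\equiv 1$ on $\supp f$, the cut-off resolvent $\chi R_\zeta\chi$ is holomorphic, hence locally bounded, in $\LinOp(L^2(X))$ near almost every real $\zeta$; dominated convergence then allows interchange of limit and integral, yielding
\begin{equation*}
\langle E(]a,b[)f,f\rangle = \int_a^b\frac{1}{2\pi i}\bigl\langle [R_{-\sqrt s/\|\alpha\|}-R_{\sqrt s/\|\alpha\|}]f,f\bigr\rangle\intd s
\end{equation*}
first for $0<a<b$ outside a countable set, and then for all $0<a<b$ by monotonicity. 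This identifies an $\intd s$-density on $]0,\infty[$, so $\langle Ef,f\rangle$ is absolutely continuous there; $\{0\}$ carries no $E$-mass because $A\ge 0$ admits no $L^2$-ground state on the infinite-volume space $X$. Polarisation recovers the operator-valued identity \eqref{spec-density-is-resolvent-diff}.

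The main technical step is this passage to the limit under the integral sign, which hinges on the local boundedness (in fact, holomorphy) of the cut-off resolvent near almost every real $\zeta$---a property transparent from the closed-form \eqref{defn-Rz}, so that the overall argument becomes a clean bookkeeping exercise once Lemma~\ref{lemma-Q-fn} is in hand.
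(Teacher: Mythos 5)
Your argument is correct and follows essentially the same route as the paper: apply Stone's formula to $A=\laplaceop-\ksq{\rho}$, identify the physical resolvent $(A-w)^{-1}$ with $R_\zeta$ via the square-root map $\zeta\mapsto\ksq{\alpha}\zeta^2$, track which quadrant of the $\zeta$-plane the boundary approach $s\pm i\eps$ corresponds to, and pass to the limit under the integral sign using holomorphy of the cut-off resolvent. The only cosmetic difference is that you hedge against possible real poles of $R_\zeta$ ("outside a countable set"), which is harmless but in fact unnecessary: by Remark~\ref{remark-cfcn-zeros} the zeros of $\cfcn(i\zeta)$ lie on the positive imaginary axis, and the apparent singularity at $\zeta=0$ is cancelled by the pole of $\cfcn(i\zeta)$, so $\zeta\mapsto R_\zeta$ is already regular on all of $\R$.
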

\begin{proof}
The spectrum of $\laplaceop$ is absolutely continuous,
in particular, $E(F)=0$ holds for every finite subset $F\subset\R$.
By Stone's formula, see e.g.\ \cite[Corollary A.12]{Borthwick07spectral},
\begin{equation}
\label{stone-formula}
E([a,b]) = \lim_{\eps\downarrow 0} \frac{1}{2\pi i}
     \int_a^b \big((\laplaceop-\ksq{\rho}-s -i\eps)^{-1} -(\laplaceop-\ksq{\rho}-s +i\eps)^{-1}\big)\intd s.
\end{equation}
We use the resolvent kernel $R_\zeta$ to evaluate the limit in \eqref{stone-formula}.
The map $\zeta\mapsto s=\ksq{\zeta}$ maps the third and the fourth quadrant of the $\zeta$-plane
to, respectively, the upper and the lower half-plane of the $s$-plane.
Set $a_+=\max(a,0)$.
Formula \eqref{stone-formula} becomes
\[
E([a,b]) = \frac{1}{2\pi i} \int_{a_+}^{b_+} \big( R_{-\zeta} -R_{\zeta}\big) \intd s, 
\quad \zeta=\sqrt{s}/\|\alpha\|>0.
\]
The assertions follow from this.
\end{proof}

\begin{theorem}
\label{thm-resolvent-poisson}
For complex $\zeta$,
\begin{equation}
\label{eq-resol-scatt}
R_{-\zeta}- R_\zeta = \frac{i}{2\langle\zeta,\alpha\rangle \cfcn(i\zeta)\cfcn(-i\zeta)} P_{i\zeta}P_{-i\zeta}^{\prime}.
\end{equation}
Here $P_{i\zeta}$ is viewed as an operator $\Dprime(B)\to\Cinfty(X)$,
and the prime denotes transpose (dual).
\end{theorem}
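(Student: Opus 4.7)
The plan is to first establish the identity formally on $K$-invariant convolution kernels via \eqref{Rz-as-convolution} and the decomposition \eqref{eq-spher-fcn-Q-cfcn} of the spherical function, and then recognize the resulting convolution as the composition $P_{i\zeta}P_{-i\zeta}^{\prime}$ through the Poisson integral representation of $\sfcn_{i\zeta}$.

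First I would work in the half-plane $\IM\zeta<0$ with test functions $f\in\Ccinfty(X)$, so both resolvents act as bounded operators on $L^2(X)$. Using \eqref{Rz-as-convolution} for each resolvent and combining over the common denominator $2i\langle\zeta,\alpha\rangle\cfcn(i\zeta)\cfcn(-i\zeta)$, I get
\[
R_{-\zeta}f-R_\zeta f
 =\frac{-1}{2i\langle\zeta,\alpha\rangle\cfcn(i\zeta)\cfcn(-i\zeta)}\,
   f\times\bigl(\cfcn(-i\zeta)Q_{i\zeta}+\cfcn(i\zeta)Q_{-i\zeta}\bigr).
\]
The expression in parentheses is exactly $\sfcn_{i\zeta}$ by \eqref{eq-spher-fcn-Q-cfcn}, and $-1/(2i)=i/2$, so the left-hand side of \eqref{eq-resol-scatt} equals $\bigl(i/(2\langle\zeta,\alpha\rangle\cfcn(i\zeta)\cfcn(-i\zeta))\bigr)\,f\times\sfcn_{i\zeta}$. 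It remains to show
\begin{equation}\label{plan-key}
f\times\sfcn_\lambda=P_\lambda P_{-\lambda}^{\prime}f
\end{equation}
for $\lambda=i\zeta$.

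For \eqref{plan-key}, I would compute $P_\lambda P_{-\lambda}^{\prime}f(x)$ using the Schwartz kernel \eqref{def-Poisson-transform} of $P_\lambda$ and its transpose, giving
\[
P_\lambda P_{-\lambda}^{\prime}f(x)
 =\int_X f(y)\!\left[\int_B e^{(\rho+\lambda)A(x,b)+(\rho-\lambda)A(y,b)}\intd b\right]\intd y,
\]
where Fubini is legitimate since $f\in\Ccinfty(X)$. The inner integral, call it $K_\lambda(x,y)$, I identify with $\sfcn_\lambda(g_y^{-1}\cdot x)$ (where $y=g_y\cdot o$) by the intertwining property of the Poisson transform: applying $T_\lambda(g)\sfcn_\lambda=P_\lambda(\pi_\lambda(g)1)$ with the compact-picture formula \eqref{eq-spher-rep-on-Aprime-B} yields $\sfcn_\lambda(g^{-1}\cdot x)=\int_B e^{(\rho+\lambda)A(x,b)}e^{(\rho-\lambda)A(g\cdot o,b)}\intd b$, which is precisely $K_\lambda(x,g\cdot o)$. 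Since $\sfcn_\lambda$ is $K$-invariant, $\sfcn_\lambda(g_y^{-1}\cdot x)=\sfcn_\lambda(g_1^{-1}g_2\cdot o)$ for any choice $x=g_1\cdot o$, $y=g_2\cdot o$, which by \eqref{eq-convolution-X} is the Schwartz kernel of convolution by $\sfcn_\lambda$. This gives \eqref{plan-key}.

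Finally I would pass from $\IM\zeta<0$, $f\in\Ccinfty(X)$ to the general statement by analytic continuation and density: both sides of \eqref{eq-resol-scatt} are meromorphic families of operators from $L^2_c(X)$ (respectively $\Dprime$-type spaces with appropriate support/growth restrictions) into $\Lloc^2(X)$ (respectively $\Cinfty(X)$), with the same poles coming only from $\cfcn(\pm i\zeta)$ and the factor $\langle\zeta,\alpha\rangle$, so the identity of meromorphic families follows from its validity on an open set. I expect the main obstacle to be step \eqref{plan-key}, specifically justifying the intertwining-based identification of $K_\lambda(x,y)$ with $\sfcn_\lambda(g_y^{-1}\cdot x)$ cleanly; once that bookkeeping with horocycle brackets and the cocycle $A(hx,b)=A(x,h^{-1}b)+A(h\cdot o,b)$ is in hand, the rest is essentially algebraic manipulation of \eqref{Rz-as-convolution} and \eqref{eq-spher-fcn-Q-cfcn}.
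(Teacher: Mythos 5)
Your proof is correct, and it takes a genuinely different route from the paper.

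The paper proves Theorem~\ref{thm-resolvent-poisson} by a PDE-style ``Green's formula at infinity'' argument: Lemma~\ref{lemma-resolvent-diff} expresses $\chi R_{-\zeta}-R_\zeta\chi$ as $R_\zeta[\laplaceop,\chi]R_{-\zeta}$, and then the limit $\chi_\eps\to 1$ is computed using the commutator formula \eqref{eq-commutator-chi-Delta} and, crucially, the boundary asymptotics \eqref{R-asymptotics} of the resolvent kernel. The Poisson kernel appears in the limit through the leading term $r_\zeta(x',b,0)$. Your argument instead stays entirely within the convolution picture: you combine the two formulas \eqref{Rz-as-convolution} over a common denominator, invoke the expansion \eqref{eq-spher-fcn-Q-cfcn} of $\sfcn_{i\zeta}$ in terms of $Q_{\pm i\zeta}$, and then recognize $f\times\sfcn_\lambda = P_\lambda P_{-\lambda}^{\prime}f$ via the intertwining property $T_\lambda(g)P_\lambda=P_\lambda\pi_\lambda(g)$ and the compact-picture formula \eqref{eq-spher-rep-on-Aprime-B}. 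Your identification of the kernel $K_\lambda(x,g\cdot o)$ with $\sfcn_\lambda(g^{-1}\cdot x)$ is exactly the symmetry formula for spherical functions, and your derivation of it from $T_\lambda(g)\sfcn_\lambda=P_\lambda(\pi_\lambda(g)1)$ is clean and uses only what the paper has set up. Your sign bookkeeping (including the flip $\langle-\zeta,\alpha\rangle=-\langle\zeta,\alpha\rangle$ and $-1/(2i)=i/2$) checks out, and the analytic-continuation step at the end is standard since both sides are meromorphic families of continuous operators.

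What each approach buys: your argument is shorter and avoids the cutoff-and-limit machinery entirely, but it leans on the group structure (intertwining of $P_\lambda$, the spherical function addition formula). The paper's argument is closer in spirit to the asymptotically hyperbolic setting and directly exhibits the Poisson kernel as the boundary trace of the resolvent kernel, which is precisely the mechanism that powers Corollary~\ref{cor-poisson-is-by-of-resolvent} and the scattering-matrix material in Section~\ref{sect-Scatt}. It is also worth noting that the paper \emph{does} later use the identity $f\times\sfcn_{i\zeta}=P_{i\zeta}P_{-i\zeta}^{\prime}f$ implicitly, when it asserts the equivalence of the two residue formulas in Section~\ref{sect-Resonances}; your proof makes that identity explicit.
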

The formal adjoint and the dual of the Poisson transformation
are related as follows: $P_{i\zeta}^*=P^{\prime}_{-i\bar\zeta}$.
Combining \eqref{eq-resol-scatt} with \eqref{spec-density-is-resolvent-diff},
we obtain a formula for the spectral measure:
\begin{corollary}
In $s>0$ the spectral measure $E$ of $\laplaceop-\ksq{\rho}$ is given by
\begin{equation}
\label{eq-spectral-measure}
\intd E(s) = \frac{1}{4\pi\langle\zeta,\alpha\rangle |\cfcn(i\zeta)|^2} P_{i\zeta}P_{i\zeta}^* \intd s,
\quad \zeta=\sqrt{s}/\|\alpha\|>0.
\end{equation}
\end{corollary}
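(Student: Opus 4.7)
The proof is essentially a direct substitution, so my plan is to simply chain together the formulas that precede the corollary and carry out the bookkeeping carefully. First I would write
\[
\intd E(s) = \bigl(R_{-\zeta} - R_\zeta\bigr)\,\frac{\intd s}{2\pi i}, \qquad \zeta = \sqrt{s}/\|\alpha\|>0,
\]
from the preceding proposition, and then insert the identity
\[
R_{-\zeta} - R_\zeta = \frac{i}{2\langle\zeta,\alpha\rangle\,\cfcn(i\zeta)\cfcn(-i\zeta)}\,P_{i\zeta}P_{-i\zeta}'
\]
from Theorem~\ref{thm-resolvent-poisson}. The prefactor $\frac{1}{2\pi i}\cdot i = \frac{1}{2\pi}$ immediately produces the $1/(4\pi)$ in the target formula.

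The only non-trivial point is to rewrite $P_{-i\zeta}'$ and $\cfcn(-i\zeta)$ in the hermitian form demanded by the statement. Here I would use the two reality properties valid for $\zeta>0$ real: namely $\bar\zeta = \zeta$, so that the relation $P_{i\zeta}^* = P'_{-i\bar\zeta}$ recorded after Theorem~\ref{thm-resolvent-poisson} specialises to $P_{-i\zeta}' = P_{i\zeta}^*$; and $\overline{\cfcn(i\zeta)} = \cfcn(\overline{i\zeta}) = \cfcn(-i\zeta)$, which follows from the fact that $\cfcn$ is defined on $\RE\lambda\in\a_+^*$ by the real integral $\int_{\bar N} e^{-(\lambda+\rho)(H(\bar n))}\intd \bar n$ and extends meromorphically, hence satisfies $\overline{\cfcn(\lambda)} = \cfcn(\bar\lambda)$. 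These two identities together turn $\cfcn(i\zeta)\cfcn(-i\zeta)$ into $|\cfcn(i\zeta)|^2$ and turn $P_{i\zeta}P'_{-i\zeta}$ into $P_{i\zeta}P_{i\zeta}^*$, giving exactly \eqref{eq-spectral-measure}.

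I do not anticipate any real obstacle; if there is one, it is only the reality of $\cfcn$ on the imaginary axis, which one might want to justify briefly by pointing to the integral representation before the meromorphic continuation. No additional regularity statement is required, because both sides of \eqref{eq-spectral-measure} are to be read as $\intd s$-almost everywhere identities of sesquilinear forms on $L^2_c(X)\times L^2_c(X)$, which is precisely the setting in which \eqref{spec-density-is-resolvent-diff} and Theorem~\ref{thm-resolvent-poisson} were established.
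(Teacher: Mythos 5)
Your proposal is correct and follows essentially the same route the paper takes: the paper itself only says ``Combining \eqref{eq-resol-scatt} with \eqref{spec-density-is-resolvent-diff}, we obtain a formula for the spectral measure,'' after first recording the relation $P_{i\zeta}^*=P'_{-i\bar\zeta}$. You have simply carried out that substitution explicitly and, helpfully, spelled out the reality identity $\overline{\cfcn(i\zeta)}=\cfcn(-i\zeta)$ for real $\zeta$, which the paper leaves implicit.
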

From \eqref{eq-spectral-measure} we obtain the functional calculus of $\laplaceop$.
\begin{corollary}
Define $\Lambda =\big(\laplaceop-\ksq{\rho}\big)^{1/2}\geq 0$ by the spectral theorem.
For every Borel function $f:\R_+\to\C$,
\[ f(\Lambda) = \int_0^\infty \frac{f(\zeta\|\alpha\|)}{2\pi |\cfcn(i\zeta)|^2} P_{i\zeta}P_{i\zeta}^* \intd \zeta \]
holds.
\end{corollary}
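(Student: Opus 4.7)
The plan is to derive the functional calculus formula directly from the expression \eqref{eq-spectral-measure} for the spectral measure via the spectral theorem, followed by a change of variables.

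First I would invoke the spectral theorem for the nonnegative selfadjoint operator $\Lambda = (\laplaceop - \ksq{\rho})^{1/2}$: since $\laplaceop - \ksq{\rho} \geq 0$ has spectral measure $E$ supported in $[0,\infty[$, the functional calculus gives
\begin{equation*}
f(\Lambda) = \int_0^\infty f(\sqrt{s})\intd E(s)
\end{equation*}
for every Borel function $f:\R_+\to\C$, interpreted in the usual sense (as a possibly unbounded operator, or in the sense of sesquilinear forms if $f$ is unbounded).

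Next I would substitute the explicit formula \eqref{eq-spectral-measure} for $\intd E(s)$, obtaining
\begin{equation*}
f(\Lambda) = \int_0^\infty f(\sqrt{s})\, \frac{1}{4\pi\langle\zeta,\alpha\rangle |\cfcn(i\zeta)|^2} P_{i\zeta}P_{i\zeta}^* \intd s,
\quad \zeta = \sqrt{s}/\|\alpha\|.
\end{equation*}
Now I would change variables via $s = \ksq{\alpha}\zeta^2$, so $\intd s = 2\ksq{\alpha}\zeta\intd\zeta$ and $\sqrt{s} = \zeta\|\alpha\|$. Under the identification $\aC \equiv \C$ via $\zeta\mapsto\zeta(H)$ fixed earlier in the paper, one has $\langle\zeta,\alpha\rangle = \ksq{\alpha}\zeta$, so the factor $\langle\zeta,\alpha\rangle$ in the denominator cancels against $\ksq{\alpha}\zeta$ coming from the Jacobian (up to the factor of $2$), yielding exactly
\begin{equation*}
f(\Lambda) = \int_0^\infty \frac{f(\zeta\|\alpha\|)}{2\pi |\cfcn(i\zeta)|^2} P_{i\zeta}P_{i\zeta}^*\intd\zeta.
\end{equation*}

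There is essentially no obstacle here; this is a direct consequence of the preceding corollary. The only subtle point worth mentioning is the meaning of the expression when $f$ is unbounded: one should either restrict to bounded Borel $f$ and then extend by standard density arguments, or interpret the identity at the level of quadratic forms against vectors in the appropriate form domain, in which case the integrand is nonnegative and the identity holds in $[0,+\infty]$. No further analysis is needed beyond the verification that the change of variables $s \mapsto \zeta$ is a diffeomorphism of $]0,\infty[$ onto itself, which is immediate.
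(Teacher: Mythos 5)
Your proof is correct and is exactly the argument the paper intends; the corollary is stated as an immediate consequence of \eqref{eq-spectral-measure}, and the change of variables $s=\ksq{\alpha}\zeta^2$ together with $\langle\zeta,\alpha\rangle=\ksq{\alpha}\zeta$ gives the stated formula with no further work.
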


Before giving the proof of Theorem~\ref{thm-resolvent-poisson},
we make some preparations.

\begin{lemma}
\label{lemma-resolvent-diff}
Let $\chi\in\Ccinfty(X)$.
Then
\[ \chi R_{-\zeta} - R_\zeta \chi = R_\zeta [\laplaceop,\chi] R_{-\zeta}. \]
\end{lemma}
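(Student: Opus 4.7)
The strategy is to exploit the fact that $\ksq{-\zeta}=\ksq{\zeta}$, so that $R_\zeta$ and $R_{-\zeta}$ are, on suitable spaces, inverses of the \emph{same} operator $P := \laplaceop - \ksq{\rho} - \ksq{\zeta}$. Once this is in place, the lemma reduces to a one-line commutator manipulation.

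The key algebraic point is that the scalar $\ksq{\rho}+\ksq{\zeta}$ commutes with multiplication by $\chi$, so $[\laplaceop,\chi]=[P,\chi]=P\chi-\chi P$. Sandwiching between the two resolvents,
\[
R_\zeta[\laplaceop,\chi]R_{-\zeta} = (R_\zeta P)\,\chi R_{-\zeta} - R_\zeta\chi\,(P R_{-\zeta}) = \chi R_{-\zeta} - R_\zeta\chi,
\]
which is the claimed identity, provided one may use $R_\zeta P=I$ on the left and $PR_{-\zeta}=I$ on the right.

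To justify this I would first restrict to $\zeta$ in the physical half-plane $\IM\zeta<0$ that avoids the poles of $R_{-\zeta}$. In this region $R_\zeta$ is the $L^2$ resolvent of the selfadjoint $\laplaceop$, so $R_\zeta P u=u$ whenever $u$ lies in the domain of $\laplaceop$. The operator $R_{-\zeta}$ is constructed via \eqref{Rz-as-convolution} as convolution by a suitably normalized multiple of $Q_{-i\zeta}$, and the fundamental-solution equation \eqref{fundamental-solution-Q} together with \eqref{convolution-facts} gives $PR_{-\zeta}f=f$ for every $f\in\Ccinfty(X)$. For such an $f$, the function $\chi R_{-\zeta}f$ has compact support (thanks to $\chi\in\Ccinfty(X)$) and, by elliptic regularity applied to $PR_{-\zeta}f=f$, is locally in $L^2$ with $\laplaceop$-image locally in $L^2$, hence lies in the domain of the selfadjoint $\laplaceop$; so the left-inversion $R_\zeta P(\chi R_{-\zeta}f)=\chi R_{-\zeta}f$ is legitimate.

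The main obstacle is really only bookkeeping: ensuring that each composition in the display makes sense as a continuous map $\Ccinfty(X)\to\Dprime(X)$, and that the cancellation of the $(\ksq{\rho}+\ksq{\zeta})\chi$-terms takes place on a space where both expressions are well-defined. Once the identity is established on this open subset of the physical half-plane, both sides are meromorphic in $\zeta$, and meromorphic continuation extends it to all $\zeta\in\C$ away from the poles of $R_\zeta$ and $R_{-\zeta}$.
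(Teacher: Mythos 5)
Your proof is correct and uses the same algebraic identity as the paper: insert $R_\zeta P=\Id$ on the left of $\chi R_{-\zeta}$ and $PR_{-\zeta}=\Id$ on the right of $R_\zeta\chi$ (with $P=\laplaceop-\ksq{\rho}-\ksq{\zeta}$, using $\ksq{-\zeta}=\ksq{\zeta}$), then subtract. The paper's proof is exactly this two-line computation; your additional care about mapping properties and meromorphic continuation is reasonable bookkeeping that the paper leaves implicit.
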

\begin{proof}
We have
\begin{align*}
\chi R_{-\zeta} &= R_{\zeta}(\laplaceop -\ksq{\rho} - \ksq{\zeta}) \chi R_{-\zeta} , \\
R_{\zeta} \chi  &= R_{\zeta}\chi (\laplaceop -\ksq{\rho} - \ksq{-\zeta}) R_{-\zeta}.
\end{align*}
Taking the difference the assertion follows.
\end{proof}

\begin{lemma}
Let $\chi\in\Cinfty(X)$ be $K$-invariant. Then
\begin{equation}
\label{eq-commutator-chi-Delta}
[L,\chi] = -2(\theta\chi)(\theta-\rho)-(\theta^2\chi).
\end{equation}
\end{lemma}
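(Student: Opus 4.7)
The plan is a direct commutator calculation that exploits the $K$-invariance of $\chi$ to reduce the problem to a one-variable computation. Under the identification $X'\equiv B\times\,]0,1[$, a $K$-invariant smooth function $\chi$ on $X$ is a function of $y$ alone. In geodesic polar coordinates $(b,y)$ centered at $o$, the Riemannian metric has the block form $dt^2 + h_{ij}(t,b)\,db^i\,db^j$ with no mixed $dt\,db$ cross terms, so the Laplacian decomposes as $L = L|_{\mathrm{rad}} + L_{\mathrm{ang}}$, where $L|_{\mathrm{rad}}$ is given by \eqref{radial-laplacian-L} and $L_{\mathrm{ang}}$ is a second-order differential operator on $B$ whose coefficients may depend on $y$ but which contains no $\partial_y$-derivative. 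Since $\chi = \chi(y)$ commutes with every $b$-derivative, $[L_{\mathrm{ang}},\chi]=0$, and consequently $[L,\chi] = [L|_{\mathrm{rad}},\chi]$.

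For the radial commutator I would invoke the elementary Leibniz identities for the Euler field $\theta = y\partial_y$: for any smooth function $a = a(y)$,
\[
[\theta,\chi] = \theta\chi,\qquad [\theta^2,\chi] = 2(\theta\chi)\theta + (\theta^2\chi),\qquad [a\theta,\chi] = a\cdot(\theta\chi).
\]
Applying these to $L|_{\mathrm{rad}} = -\theta^2 - (\theta\log J)\theta$ gives
\[
[L,\chi] = -(\theta^2\chi) - 2(\theta\chi)\theta - (\theta\log J)(\theta\chi).
\]
The final step is to read off the coefficient $\theta\log J$. From the explicit factorization $J(y) = y^{-2\rho}(1-y^2)^{m_\alpha}(1-y^4)^{m_{2\alpha}}$ one computes $\theta\log J = -2\rho$, so that $-(\theta\log J)(\theta\chi) = 2\rho\,(\theta\chi)$. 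Combining $-2(\theta\chi)\theta + 2\rho(\theta\chi) = -2(\theta\chi)(\theta-\rho)$ then yields the identity as stated.

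The only non-bookkeeping ingredient is the structural reduction $[L,\chi] = [L|_{\mathrm{rad}},\chi]$, which relies on the absence of mixed $\partial_y\partial_b$ derivatives in the polar-coordinate normal form of the symmetric-space metric. Once this is in place the remainder is routine manipulation with the Euler vector field together with the identification of the weight of $J$ coming from the defining relation $\rho = (m_\alpha + 2m_{2\alpha})/2$.
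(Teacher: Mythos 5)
Your plan — reduce to the radial Laplacian using the block structure of the metric in polar coordinates, then apply the Leibniz rules for $\theta=y\partial_y$ — is exactly the paper's intended route (the paper's one-line proof says precisely ``direct computation from the formula for the radial Laplacian''). The reduction $[L,\chi]=[L|_{\operatorname{rad}},\chi]$ is justified as you say, and the commutator algebra giving
\[
[L|_{\operatorname{rad}},\chi] = -(\theta^2\chi) - 2(\theta\chi)\theta - (\theta\log J)(\theta\chi)
\]
is correct.

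The gap is in the last step: the claim $\theta\log J = -2\rho$ is false. Computing from $J(y)=y^{-2\rho}(1-y^2)^{m_\alpha}(1-y^4)^{m_{2\alpha}}$ one gets
\[
\theta\log J = -2\rho - \frac{2m_\alpha y^2}{1-y^2} - \frac{4m_{2\alpha}y^4}{1-y^4},
\]
so $\theta\log J$ equals $-2\rho$ only at $y=0$. Substituting the correct value, the commutator is
\[
[L,\chi] = -2(\theta\chi)(\theta-\rho) - (\theta^2\chi) - \Bigl(\tfrac{2m_\alpha y^2}{1-y^2}+\tfrac{4m_{2\alpha}y^4}{1-y^4}\Bigr)(\theta\chi),
\]
which differs from the asserted identity by a term that is $\bigoh(y^2)(\theta\chi)$. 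Equivalently, in the notation of \eqref{laplacian-in-ky-coord}, the term $yC(b,y,\partial_b,\theta)$ contains a $\theta$ in its radial part, so $[yC,\chi]\neq 0$, contrary to what your computation (and the paper's terse proof) tacitly assumes. This extra term does not spoil the later application in the proof of Theorem~\ref{thm-resolvent-poisson}, since there $\chi=\chi_\eps$ is localized near $y\sim\eps$ and the additional $\bigoh(y^2)$ factor kills that contribution in the $\eps\to 0$ limit; but as stated the lemma is an operator identity, and your proof, as it stands, does not establish it — the evaluation of $\theta\log J$ must be done exactly, and then the remaining term has to be accounted for.
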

\begin{proof}
By assumption, $\chi$ depends only on $y$.
The asserted formula follows by direct computation from the formula~\eqref{radial-laplacian-L} for the radial Laplacian.
\end{proof}

\begin{proof}[Proof of Theorem~\ref{thm-resolvent-poisson}]
The equation \eqref{eq-resol-scatt} to be proved is an equation of meromorphic functions.
It suffices to prove it under the assumption
that $2i\zeta\not\in\Z$ and $\cfcn(i\zeta)\cfcn(-i\zeta)\neq 0$.
Let $\chi_\eps\in\Ccinfty(X)$ such that $\chi_\eps=1$ in $\{y>\eps\}$.
By Lemma~\ref{lemma-resolvent-diff},
\begin{equation}
\label{Resolv-diff-as-limit}
R_{-\zeta}(x_1,x_2) - R_\zeta(x_1,x_2)
  = \ksq{\alpha} \lim_{\eps \to 0} \int_X  R_\zeta(x_1,x) [L, \chi_\eps] R_{-\zeta}(x,x_2) \intd x 
\end{equation}
holds uniformly on compact subsets of $X\times X$.
To evaluate the limit on the right-hand side,
we substitute $x\equiv(b,y)$ in the integrals, using \eqref{eq-measure-dx}.
Moreover, we require that $\chi_\eps(y)=\chi(y/\eps)$ with $\chi\in\Cinfty(\R)$
such that $\chi(y)=0$ if $y<1/2$ and $\chi(y)=1$ if $1\leq y$.

We claim that the last term in the expression \eqref{eq-commutator-chi-Delta}
for the commutator $[L,\chi_\eps]$ does not contribute to the limit in \eqref{Resolv-diff-as-limit}.
To see this, we use \eqref{R-asymptotics} to get
\begin{align*}
\int_X  R_\zeta(x_1,x) & (\theta^2 \chi_\eps)(x) R_{-\zeta}(x,x_2) \intd x \\
 &= \int_B \int_{-1}^1 (\partial_y \theta \chi_\eps)(y) y^{2\rho} J(y) r_\zeta(x_1,b,y)r_{-\zeta}(x_2,b,y)\intd y \intd b.
\end{align*}
Notice that $y^{2\rho}J(y)$ extends smoothly to $y\leq 0$, and equals $1$ at $y=0$.
The limit
\[ \lim_{\eps \to 0} \partial_y y \partial_y \chi_\eps(y) =\partial_y y  \delta(y) = 0 \]
holds in the space of distributions, which implies the claim.

Next we calculate the essential contribution from the commutators $[L,\chi_\eps]$ to the limit.
An application of \eqref{R-asymptotics} gives
\[
(\theta-\rho)R_{-\zeta}(x,x_2)
   = -i\zeta y^{\rho-i\zeta}r_{-\zeta}(x_2,b,y) +  y^{1+\rho-i\zeta}\partial_y r_{-\zeta}(x_2,b,y).
\]
Therefore
\begin{align*}
\int_X  R_\zeta(x_1,x) & (\theta \chi_\eps)(x)(\theta-\rho) R_{-\zeta}(x,x_2) \intd x \\
 &= -i\zeta \int_B \int_{-1}^1 (\partial_y \chi_\eps)(y)
       y^{2\rho} J(y) \big(r_\zeta(x_1,b,y)r_{-\zeta}(x_2,b,y)+\bigoh(y)\big)\intd y \intd b.
\end{align*}
So finally, we evaluate the limit in \eqref{Resolv-diff-as-limit} as follows:
\[ R_{-\zeta}(x_1,x_2) - R_\zeta(x_1,x_2) = \ksq{\alpha} 2i\zeta \int_B r_\zeta(x_1,b,0) r_{-\zeta}(x_2,b,0) \intd b.  \]
In view of \eqref{def-Poisson-transform} and of the last part of \eqref{R-asymptotics}, the theorem follows.
\end{proof}

\section{Resonances}
\label{sect-Resonances}

Recall that the poles of the resolvent $R_\zeta$ are called resonances.
The resonances and their residues were determined in \cite[Theorem 3.1]{MiatelloWill99resid}.
See also \cite[Theorem 3.8]{HilgertPasquale09reson}.
The set of resonances is either empty or forms an arithmetic progression
along the positive imaginary axis starting at $i\rho$.
We partially rederive these results below.

Resonances will be seen to correspond to zeros of the function $\cfcn(i\zeta)\cfcn(-i\zeta)$.
The meromorphic extension of the $\cfcn$-function is given by the formula
\begin{equation*}
\cfcn(\lambda)=c_0\frac{\Gamma(\lambda)2^{-\lambda}}%
  {\Gamma(\frac{1}{2}(\frac{m_{\alpha}}{2}+1+\lambda))\Gamma(\frac{1}{2}(\frac{m_{\alpha}}{2}+m_{2\alpha}+\lambda))},
\end{equation*}
where $c_0$ is determined by $\cfcn(\rho)=1$.

\begin{remark} 
\label{remark-cfcn-zeros}
We recall the location of the zeros of $\cfcn(i\zeta)\cfcn(-i\zeta)$ from \cite[Lemma 2.1]{HilgertPasquale09reson}:
When $m_{2\alpha}=0$ and $m_\alpha$ is even, 
the function $\cfcn(i\zeta)\cfcn(-i\zeta)$ has no zeros.
Otherwise, the zeros are simple and are located at the points $\zeta=i(\rho+jk)$, $k\in \N_0$.
Here $j=2$ if $m_{2\alpha}\neq 0$,  and $j=1$ if $m_{2\alpha}=0$ and $m_{\alpha}$ is odd.
\end{remark}

Next, we determine the resonances and the residues.

\begin{proposition}
The resonances $\zeta$  are precisely the zeros of $\cfcn(i\zeta)\cfcn(-i\zeta)$ with $\IM \zeta>0$.
Each resonance $\zeta$ is a simple pole with residue
\begin{equation}
\label{eq-residue-of-resonance}
\Res R_\zeta 
 = \frac{-1}{2\langle\zeta,\alpha\rangle \cfcn'(i\zeta)\cfcn(-i\zeta)} P_{i\zeta}P_{-i\zeta}^{\prime}.
\end{equation}
Here $\cfcn'$ is the derivative of the $\cfcn$-function.
The rank of $\Res R_\zeta$ is finite.
Alternatively,
\begin{equation*}
\Res R_\zeta f
 = \frac{-1}{2\langle\zeta,\alpha\rangle \cfcn'(i\zeta)\cfcn(-i\zeta)} f\times \sfcn_{i\zeta}
\end{equation*}
holds for $f\in \Eprime(X)$.
\end{proposition}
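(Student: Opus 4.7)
The approach is to read off the resonances and their residues directly from Theorem~\ref{thm-resolvent-poisson}, rewriting it as
\[
R_\zeta \;=\; R_{-\zeta} \;-\; \frac{i}{2\langle\zeta,\alpha\rangle\,\cfcn(i\zeta)\cfcn(-i\zeta)}\,P_{i\zeta}P_{-i\zeta}'.
\]
In the half-plane $\IM\zeta>0$ the operator $R_{-\zeta}$ is holomorphic by self-adjointness of $\laplaceop$, the Poisson factors $P_{i\zeta}$ and $P_{-i\zeta}'$ depend holomorphically on $\zeta$ by the discussion in Section~\ref{sect-Poisson}, and $\langle\zeta,\alpha\rangle\neq 0$ for $\zeta\neq 0$. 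Hence in $\IM\zeta>0$ the poles of $R_\zeta$ coincide with the zeros of $\zeta\mapsto\cfcn(i\zeta)\cfcn(-i\zeta)$. By Remark~\ref{remark-cfcn-zeros} these zeros are simple and located at $\zeta_0=i(\rho+jk)$, $k\in\N_0$; a glance at the explicit $\cfcn$-formula shows that at such $\zeta_0$ only the factor $\cfcn(i\zeta)$ vanishes, whereas $\cfcn(-i\zeta_0)=\cfcn(\rho+jk)\neq 0$ (the Gamma arguments in the denominator of $\cfcn(\rho+jk)$ are positive). Expanding $\cfcn(i\zeta)=i\,\cfcn'(i\zeta_0)(\zeta-\zeta_0)+\bigoh((\zeta-\zeta_0)^2)$ and taking the residue of the display then yields \eqref{eq-residue-of-resonance}.

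For the convolution form, I would start from \eqref{Rz-as-convolution} and use \eqref{eq-spher-fcn-Q-cfcn} to solve for $Q_{i\zeta}=\cfcn(-i\zeta)^{-1}\bigl(\sfcn_{i\zeta}-\cfcn(i\zeta)Q_{-i\zeta}\bigr)$. Substituting produces
\[
R_\zeta f \;=\; \frac{f\times\sfcn_{i\zeta}}{2i\langle\zeta,\alpha\rangle\,\cfcn(i\zeta)\cfcn(-i\zeta)} \;-\; \frac{f\times Q_{-i\zeta}}{2i\langle\zeta,\alpha\rangle\,\cfcn(-i\zeta)}.
\]
At a resonance $\zeta_0$ the second summand is holomorphic: indeed $-i\zeta_0=\rho+jk>0$ lies outside $-\halb\N$, so Lemma~\ref{lemma-Q-fn} applies to $Q_{-i\zeta}$ near $\zeta_0$. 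Reading off the residue of the first summand then reproduces the claimed formula involving $f\times\sfcn_{i\zeta_0}$. As a side benefit this confirms that the pole is genuine rather than removable, since convolution by the nonzero function $\sfcn_{i\zeta_0}$ is a nontrivial operator.

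The finite-rank property I would simply cite from \cite{MiatelloWill99resid,HilgertPasquale09reson}, in line with the authors' outline in the introduction. The main point requiring care is the bookkeeping around the exceptional set in Lemma~\ref{lemma-Q-fn}: at a resonance one has $i\zeta_0=-(\rho+jk)\in-\halb\N$, so \eqref{Rz-as-convolution} cannot be evaluated at $\zeta_0$ directly; the rewrite via $Q_{-i\zeta}$ and the spherical-function identity cleanly sidesteps this, provided one has first verified that it is $\cfcn(i\zeta)$ and not $\cfcn(-i\zeta)$ that supplies the zero at each $\zeta_0$.
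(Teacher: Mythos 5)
Your proof is correct and follows essentially the same route as the paper: rewriting \eqref{eq-resol-scatt} to isolate the poles, verifying via Remark~\ref{remark-cfcn-zeros} and the explicit $\cfcn$-formula that only the factor $\cfcn(i\zeta)$ vanishes at $\zeta_0=i(\rho+jk)$, reading off \eqref{eq-residue-of-resonance}, deriving the convolution form from \eqref{Rz-as-convolution} and \eqref{eq-spher-fcn-Q-cfcn}, and citing the references for finite rank. The only minor variation is in confirming the pole is not removable: you use that convolution by the nonzero function $\sfcn_{i\zeta_0}$ is nontrivial, while the paper observes directly that $P_{\lambda}P_{-\lambda}'\neq 0$ for real $\lambda$ because the Poisson kernel is then a positive function; both are valid, and your side remark about the exceptional set of Lemma~\ref{lemma-Q-fn} being sidestepped by the spherical-function rewrite is a genuinely useful clarification.
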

The location of the resonances differs from that given in \cite[Theorem 3.8]{HilgertPasquale09reson}
by a scaling factor; this is because in the cited paper $z\mapsto R_{-z/\|\alpha\|}$ is taken as the resolvent.
\begin{proof}
Let $\zeta$ be a resonance.
It follows from the holomorphicity assertion in Lemma \ref{lemma-Q-fn} and \eqref{defn-Rz}, that $\IM \zeta > 0$.
Formula \eqref{eq-resol-scatt} then implies that  $\cfcn(i\zeta)=0$ and \eqref{eq-residue-of-resonance}.
Conversely, if $\cfcn(i\zeta)\cfcn(-i\zeta)=0$ then the residue is non-zero because
$P_{\lambda}P_{-\lambda}^{\prime}\neq 0$ if $\lambda$ is real.
The latter follows because the Schwartz kernel of the Poisson
transform $P_\lambda$ is a positive function when $\lambda$ is real.
Since, by Remark \ref{remark-cfcn-zeros}, the zeros of $\cfcn(i\zeta)\cfcn(-i\zeta)$ are simple, so are the poles of $R_\zeta$.
The convolution formula for the residue follows from
\eqref{Rz-as-convolution} and \eqref{eq-spher-fcn-Q-cfcn}.
The rank-finiteness of residues
is shown in \cite[Theorem 3.1]{MiatelloWill99resid} and  \cite[Corollary 3.7]{HilgertPasquale09reson}.
\end{proof}

The rank of the resolvent residue operator $\Res R_\zeta$ is called the multiplicity of the resonance $\zeta$.
As discussed in \cite[Theorem 4.1]{MiatelloWill99resid} and  \cite[Section 3]{HilgertPasquale09reson},
the rank can be explicitly determined using the Weyl dimension formula.
For explicit formulae for the multiplicities, we refer to \cite[Remark 4.2]{MiatelloWill99resid}.

\section{Boundary Values}
\label{sect-Boundary}

Using \eqref{X-ident-KM-unitinterval},
we identify continuous functions and differential operators on $X$ with
their pullbacks to $B\times]0,1[$.
The Laplacian $L=\ksq{\alpha}^{-1}\laplaceop$
uniquely extends to a differential operator with real analytic coefficients on $B\times]-1,1[$:
\begin{equation}
\label{laplacian-in-ky-coord}
L = -\theta^2 +2\rho \theta + yC(b,y,\partial_b,\theta), \quad \theta=y\partial_y.
\end{equation}
See \cite[(F) on page~13]{Kashiwara78eigen}.
The space $\E_{\lambda}(X)$ is the solution space of the eigenequation $(L-\rho^2+\lambda^2)u=0$.
The operator $L$ is of Fuchsian type (of weight zero) in the sense of \cite{BaouendiGoulaouic73};  
in \cite{KashiwaraOshima77} and \cite{Oshima83bdryval}, $L$ is called regular singular in the weak sense.
The polynomial
\[ I(s) = y^{-s}L y^s|_{y=0+} = -s^2 +2\rho s \]
is the indicial polynomial of $L$.

Solutions of Fuchsian type equations can be assigned boundary values.
More specifically,
boundary values of $u\in\Et_{\lambda}(X)$ are defined with respect to characteristic exponents,
that is the roots $s=\rho\pm \lambda$ of the indicial equation $I(s)-\rho^2+\lambda^2=0$.
To define its boundary values, $u$ needs to satisfy the eigenequation only in a neighbourhood of infinity.
So let $X_\eta$, $0<\eta\leq 1$, denote the open subset of $X$ defined by $y<\eta$, and denote by
\[
\Et_\lambda(X_\eta) =\{u\in\cup_{m}\Sob{-m}(B\times]0,\eta[)\mid (L-\rho^2+\lambda^2)u=0\}
\]
the (DFS)-space of tempered eigenfunctions with spectral parameter $\lambda$ in $X_\eta$.
\begin{lemma}
\label{lemma-bv-def-and-limit}
For every $0<\eta\leq 1$, and every $\lambda\in\C$, $-2\lambda\not\in \N_0$,
there is a continuous linear operator $\bv_{\rho-\lambda}:\Et_{\lambda}(X_\eta)\to\Dprime(B)$ such that
\begin{equation}
\label{eq-bv-is-limit}
\bv_{\rho-\lambda} u = \lim\nolimits _{y\to 0+} y^{-\rho+\lambda} u
\quad\text{in $C(B)$}
\end{equation}
holds, provided the uniform limit on the right exists.
If $f$ is a holomorphic map from a bounded open set $\Lambda\subset\C\setminus -\halb\N_0$
into the space $\LinOp(V,\Sob{-m}(B\times]0,\eta[))$ of bounded linear operators
from a Banach space $V$ into the Sobolev space of some order $-m$,
such that the range of $f(\lambda)$ is contained in $\E_\lambda(X_\eta)$,
then $\Lambda\ni\lambda\mapsto \bv_{\rho-\lambda} f(\lambda)$ is a holomorphic map
into $\LinOp(V,\Sob{-m'}(B))$ for some $m'=m'(m,\Lambda)$.
If $0<\eta'<\eta$ and if $\bv_{\rho-\lambda}'$ denotes the
corresponding operator from $\Et_{\lambda}(X_{\eta'})$ into $\Dprime(B)$,
then $\bv_{\rho-\lambda}u= \bv_{\rho-\lambda}'(u|_{X_{\eta'}})$ holds for $u\in\Et_{\lambda}(X_\eta)$.
For $\eta=1$, $X_\eta=X$ holds, and $\bv_{\rho-\lambda}:\Et_\lambda(X)\to\Dprime(B)$ is $G$-equivariant.
\end{lemma}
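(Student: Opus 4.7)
The plan is to follow the approach of \cite{Oshima83bdryval} on boundary values for regular singular Fuchsian operators. By \eqref{laplacian-in-ky-coord} we have the factorization
\[
L - \rho^2 + \lambda^2 = -(\theta - \rho - \lambda)(\theta - \rho + \lambda) + yC,
\]
so after substituting $u = y^{\rho-\lambda} v$ the eigenequation becomes
\[
\theta(\theta - 2\lambda)\, v = y\, \widetilde C_\lambda v,
\]
where $\widetilde C_\lambda$ is a $\lambda$-holomorphic family of second-order differential operators with analytic coefficients in $y$ (obtained by conjugating $C$ with $y^{\rho-\lambda}$). Formally any solution decomposes as $u = y^{\rho-\lambda} v + y^{\rho+\lambda} w$ with $v,w$ regular at $y=0$, and the plan is to set $\bv_{\rho-\lambda} u = v|_{y=0}$. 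The hypothesis $-2\lambda\notin\N_0$ is exactly the condition that no non-negative integer power of $y$ shifts $y^{\rho+\lambda}$ onto $y^{\rho-\lambda}$, so the coefficient of $y^{\rho-\lambda}$ in any such expansion is unambiguous.

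The technical core is upgrading this formal picture to the distributional setting. I would construct a right inverse $N_\lambda$ for the indicial operator $\theta(\theta - 2\lambda)$: since it acts on $y^k$ by $k(k - 2\lambda)$ and this is non-zero for $k\geq 1$ under the hypothesis, $N_\lambda$ can be realized as an iterated integral in $y$ (essentially $y^{2\lambda}\int_0^y s^{-2\lambda -1}\int_0^s t^{-1}(\cdot)\, dt\, ds$, suitably interpreted). A careful analysis in Sobolev spaces on $B\times {]0,\eta[}$ shows that $N_\lambda$ is continuous and that the composition $N_\lambda\circ y\widetilde C_\lambda$ gains one power of $y$, hence one order of $y$-regularity. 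The eigenequation then yields the fixed-point identity $v = v_0 + N_\lambda(y\widetilde C_\lambda v)$, and iterating $N$ times writes $v$ as a finite partial expansion $\sum_{k=0}^{N-1} y^k v_k$ plus a remainder in $y^N \Sob{-m'}(B\times {]0,\eta[})$. For $N$ large enough relative to $m$ and $\dim X$, standard trace theory permits restriction to $y=0$, yielding $\bv_{\rho-\lambda} u = v_0 \in \Sob{-m'}(B)$.

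Verifying the listed properties is then largely bookkeeping. Continuity in $u$ and the loss of order $m\to m'$ come from the boundedness of $N_\lambda$ and $\widetilde C_\lambda$ on fixed Sobolev scales; holomorphic dependence on $\lambda$ follows from the holomorphy of both operators on the admissible region by a Neumann-series argument for the fixed point; agreement with \eqref{eq-bv-is-limit} when the uniform limit exists is forced by uniqueness of the leading coefficient in the decomposition; compatibility with restriction $\eta'<\eta$ is automatic since the construction is purely local at $y=0$. For $G$-equivariance when $\eta=1$, the $G$-action on $B\times{]-1,1[}$ via the charts $\Phi_g$ from \eqref{Jacobian-yprime-y}, combined with the identification $L_\lambda\cong (N^*B)^{\rho-\lambda}$ noted after \eqref{eq-spher-rep-on-Aprime-B}, transforms the leading $y^{\rho-\lambda}$-coefficient exactly by the principal series formula \eqref{eq-spher-rep-on-Aprime-B}. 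I expect the main obstacle to be producing $N_\lambda$ so that its operator norm depends holomorphically on $\lambda$ in the admissible region — its natural kernel involves the complex power $y^{2\lambda}$, which must be handled with care distributionally — and controlling $m'$ uniformly on bounded $\lambda$-regions staying away from the resonant set $-2\lambda\in\N_0$.
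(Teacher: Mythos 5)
Your proposal takes a genuinely different route from the paper. The paper follows {\=O}shima's ``extension by zero'' construction: set $v = y^{\lambda-\rho}u$, extend $v$ to a distribution $\tilde v$ on $B\times{]-\eta,\eta[}$ supported in $\{y\geq 0\}$ satisfying $L_\lambda \tilde v = 0$ (the extension being built out of explicit extension operators $E_m:\Sob{-m}(\Omega)\to\Sob{-2m}(M)$ and a finite algebraic correction $S_\lambda$ acting on distributions supported on $\{y=0\}$, which uses $-2\lambda\notin\N_0$), and then read off the boundary value from the identity $(y^{-1}L_\lambda)\tilde v = 2\lambda\,\bv_{\rho-\lambda}u\otimes\delta(y)$. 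All the listed properties (continuity, holomorphy, locality, and $G$-equivariance via the conjugation identity $\gamma_g^{\rho-\lambda}\Phi_g^* L_\lambda = L_\lambda\gamma_g^{\rho-\lambda}\Phi_g^*$) are then read off from this explicit composition of bounded operators. Your scheme is instead an asymptotic-expansion / fixed-point iteration, closer in spirit to the approach of van den Ban and Schlichtkrull (the paper's own pointer to \cite[Lemma~9.5]{BanSchlichtkrull87} acknowledges that line of argument).

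As sketched, your iteration has several gaps that the paper's construction is specifically designed to sidestep. First, the homogeneous solution $v_0$ of $\theta(\theta-2\lambda)v_0=0$ is $a(b)+y^{2\lambda}c(b)$, not a function of $b$ alone; iterating $v = v_0 + N_\lambda(y\widetilde C_\lambda v)$ therefore produces a two-branch expansion in powers $y^k$ and $y^{2\lambda+k}$ (with $\log y$ terms when $2\lambda\in\N$, which is not excluded by $-2\lambda\notin\N_0$), so the claimed ``finite partial expansion $\sum_k y^k v_k$'' is not of the right form, and picking out the $y^0$-coefficient requires a normalization of $N_\lambda$ and an argument you have not supplied. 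Second, your right inverse $N_\lambda$ is an integral operator in $y$ with base point $y=0$ and kernel involving $y^{2\lambda}$; for $v$ merely in $\Sob{-m}(B\times{]0,\eta[})$ the repeated $y$-integrals from $0$ are not obviously defined, and the outer integral $\int_0^y s^{-2\lambda-1}(\cdots)\,ds$ diverges whenever the input is not $o(y^{2\lambda})$, which is precisely the regime one must handle because $\RE\lambda$ is allowed to be negative. The paper circumvents this entirely by using the differential operator $y^{-1}L_\lambda$ (defined on all distributions) rather than an integral inverse. Third, the appeal to ``standard trace theory'' to restrict $v_0$ to $y=0$ does not go through: each application of $\widetilde C_\lambda$ loses two orders in the tangential Sobolev scale, so although the remainder after $N$ iterations has a factor $y^N$, it lives in $\Sob{-m-2N}$ in $b$, and no trace is gained. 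The paper avoids traces altogether by extracting the boundary value as the coefficient of $\delta(y)$ in $(y^{-1}L_\lambda)\tilde v$ and then pairing against a fixed cutoff $\psi$ via $\iota_\psi$; the limit formula \eqref{eq-bv-is-limit} is then a separate, elementary commutator computation (Lemma~\ref{lemma-bv-def-justified}) valid when $v$ extends continuously.

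So: correct instinct and a legitimate alternative strategy, but the details of $N_\lambda$ on distributions, the bookkeeping of the two indicial branches (and near-resonances $2\lambda\in\N$), and the final restriction to $y=0$ all need more than the sketch provides. If you want to pursue the iterative route rigorously, \cite{BanSchlichtkrull87} is the model to follow; the paper's proof via extension operators and $S_\lambda$ is arguably shorter and sidesteps all three issues.
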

Lemma~\ref{lemma-bv-def-and-limit} is proved in Appendix~\ref{appx-bvmap}.
The definition of the boundary value map is the same as the one given in \cite[Definition 3.3]{Oshima83bdryval},
but the limit formula \eqref{eq-bv-is-limit}, which is needed in this generality below, seems to be new.
See \cite[Lemma~9.5]{BanSchlichtkrull87} for a similar result.

If $f\in\Eprime(X)$, then, by \eqref{fundamental-solution-Q} and \eqref{Rz-as-convolution}, in $X\setminus\supp(f)$, $u=R_\zeta f$ solves $(L-\rho^2-\zeta^2)u=0$.
It follows from \eqref{R-asymptotics} that $u$ is tempered.
Hence the boundary values of $u$ are defined.
\begin{corollary}
\label{cor-poisson-is-by-of-resolvent}
The transpose of the Poisson transform $P_{\lambda}:\Dprime(B)\to\Cinfty(X)$ equals
\[ P_{\lambda}^{\prime}= 2\langle\lambda,\alpha\rangle \cfcn(\lambda) \bv_{\rho+\lambda} R_{-i\lambda}:\Eprime(X)\to \Cinfty(B). \]
\end{corollary}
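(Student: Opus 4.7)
The plan is to substitute $\zeta=-i\lambda$ in the asymptotic expansion \eqref{R-asymptotics} of the resolvent kernel and then invoke the limit formula \eqref{eq-bv-is-limit} of Lemma~\ref{lemma-bv-def-and-limit}. Since the eigenequation $(L-\rho^2+\lambda^2)u=0$ depends only on $\lambda^2$, one has $\Et_\lambda(X_\eta)=\Et_{-\lambda}(X_\eta)$, so Lemma~\ref{lemma-bv-def-and-limit} applied with $-\lambda$ in place of $\lambda$ produces a boundary value operator $\bv_{\rho+\lambda}\colon \Et_\lambda(X_\eta)\to\Dprime(B)$, valid whenever $2\lambda\notin\N_0$.

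With $\zeta=-i\lambda$ one has $i\zeta=\lambda$ and $2i\langle\zeta,\alpha\rangle\cfcn(i\zeta)=2\langle\lambda,\alpha\rangle\cfcn(\lambda)$, so \eqref{R-asymptotics} rewrites as
\[
R_{-i\lambda}(x,x')=y^{\rho+\lambda}\,r_{-i\lambda}(x',b,y),
\qquad
r_{-i\lambda}(x',b,0)=\frac{P_\lambda(x',b)}{2\langle\lambda,\alpha\rangle\cfcn(\lambda)},
\]
where $x\equiv(b,y)$. For $f\in\Eprime(X)$ with compact support $\{y\geq\eta_0\}$ and $0<\eta<\eta_0$, the function $u:=R_{-i\lambda}f$ satisfies the eigenequation on $X_\eta$ (via \eqref{fundamental-solution-Q} and \eqref{Rz-as-convolution}), is tempered by the leading $y^{\rho+\lambda}$, and therefore lies in $\Et_\lambda(X_\eta)$. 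Pairing the resolvent kernel with $f$ in its second argument gives
\[
y^{-(\rho+\lambda)}(R_{-i\lambda}f)(b,y)=\bigl\langle f,\,r_{-i\lambda}(\cdot,b,y)\bigr\rangle,
\]
and I would then show that, as $y\downarrow 0$, the right-hand side converges uniformly in $b\in B$ to $\langle f, r_{-i\lambda}(\cdot,b,0)\rangle = (2\langle\lambda,\alpha\rangle\cfcn(\lambda))^{-1}(P_\lambda' f)(b)$. The limit formula \eqref{eq-bv-is-limit} identifies the uniform limit on the left with $\bv_{\rho+\lambda}(R_{-i\lambda}f)(b)$, and clearing denominators yields the corollary.

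The main (and essentially only) obstacle is the uniform-in-$b$ convergence of the distributional pairing, which is exactly what the limit formula \eqref{eq-bv-is-limit} requires. This is a consequence of the joint real analyticity of $r_{-i\lambda}$ near $X'\times B\times\{0\}$: on a compact set of the form $\supp f\times B\times[0,\eta]$ contained in the domain of $r_{-i\lambda}$ (we may assume $\supp f\subset X'$; the case $o\in\supp f$ then follows from continuity of both sides in $f$), all $x'$-derivatives of $r_{-i\lambda}$ are uniformly continuous, so the test functions $r_{-i\lambda}(\cdot,b,y)$ converge in $C^\infty(\supp f)$ uniformly in $b$ as $y\downarrow 0$, and the standard seminorm estimate for the compactly supported distribution $f$ yields the claimed uniform convergence in $C(B)$.
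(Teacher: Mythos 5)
Your proposal is correct and takes essentially the same route as the paper: the paper's proof consists precisely of citing \eqref{def-Poisson-transform}, \eqref{R-asymptotics}, and \eqref{eq-bv-is-limit}, and your argument spells out how these combine — substituting $\zeta=-i\lambda$ into the asymptotic expansion, noting $\Et_{\lambda}=\Et_{-\lambda}$ so that $\bv_{\rho+\lambda}$ is defined on $\Et_\lambda(X_\eta)$, and using the real analyticity of $r_{-i\lambda}$ to get the uniform convergence that \eqref{eq-bv-is-limit} requires.
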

\begin{proof}
This follows from \eqref{def-Poisson-transform}, \eqref{R-asymptotics}, and \eqref{eq-bv-is-limit}.
\end{proof}

\begin{remark}
\label{remark-bv-definitions}
The boundary value $\bv_{\rho-\lambda}u$ of $u$ is encoded in $\tilde u$,
by which is meant the unique extension of $u$ to $B\times]-1,\eta[$,
which solves the eigenvalue equation $(L-\rho^2+\lambda^2)\tilde u=0$, and which is supported in $y\geq 0$.
The construction of $\tilde u$ is done as in Appendix~\ref{appx-bvmap}.
The original definition of boundary values, \cite[Definition 4.8]{KashiwaraOshima77},
assumes the stronger condition $2\lambda\not\in\Z$ (condition (A) in the cited paper).
It relies on the following microlocal equation which determines the boundary values uniquely:
\begin{equation}
\label{eq-KO-bv-def}
\tilde u \equiv A_- (y^{\rho-\lambda} \otimes \bv_{\rho-\lambda}u) + A_+ (y^{\rho+\lambda} \otimes \bv_{\rho+\lambda}u)
\quad\text{at $N^*B\setminus\{0\}$.}
\end{equation}
Here $A_\pm$ are zeroth order (analytic) pseudodifferential operators
with principal symbols equal to one at $N^*Y\setminus\{0\}$.
Recall that two distributions are equal as microfunctions at a conic subset $\Gamma\subset T^*X\setminus\{0\}$
if the analytic wavefront set of their difference has empty intersection with $\Gamma$.
If, with $v_\pm$ real analytic at $y=0$,
$u = y^{\rho-\lambda} v_- + y^{\rho+\lambda} v_+$
holds, then $\bv_{\rho\pm \lambda}u={v_\pm}|_{y=0}$.
See \cite[Theorem 3.5]{Oshima83bdryval} and \cite[Theorem 5.14]{KashiwaraOshima77}.
\end{remark}

It follows from \eqref{eq-KO-bv-def} and the Holmgren--Kashiwara Theorem
that $u$ vanishes when both boundary values vanish, \cite[Proposition 4.9]{KashiwaraOshima77}.
The following result implies that, for global eigenfunctions,
the conclusion already holds when $\cfcn(\lambda)\neq 0$ and when
a single boundary value vanishes.

\begin{theorem}[{\cite{Lewis78eigenf}, \cite{Kashiwara78eigen}, \cite[Corollary~5.5]{OshimaSekiguchi80}}]
\label{Thm-KKMOOT}
Let $\lambda\in\C\setminus -\halb\N_0$.
Then $\bv_{\rho-\lambda}$ is a $G$-equivariant map from $\Et_{\lambda}(X)$ into $\Dprime(B)$, and
\begin{equation}
\label{eq-beta-Poisson-equal-id}
\bv_{\rho-\lambda} P_{\lambda} =\cfcn(\lambda)\Id.
\end{equation}
If, in addition, $\cfcn(\lambda)\neq 0$, then $\bv_{\rho-\lambda}$
is injective, hence $P_{\lambda}$ an isomorphism.
\end{theorem}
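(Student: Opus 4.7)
The plan proceeds in three steps.

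\emph{Step 1: the $G$-equivariance and the identity \eqref{eq-beta-Poisson-equal-id}.} The $G$-equivariance of $\bv_{\rho-\lambda}$ on $\Et_{\lambda}(X)$ is already contained in Lemma~\ref{lemma-bv-def-and-limit} (the case $\eta=1$), so no new work is required for that clause. For the identity, I first restrict to $\RE\lambda>0$ and $f\in C(B)$. Under the rank-one identification $\aC\cong\C$ and the substitution $y=e^{-t}$ one has $a_t^{\rho-\lambda}=y^{-\rho+\lambda}$, so the Helgason--Fatou theorem \eqref{Helgason-Fatou-thm} reads
\[
\lim_{y\to 0+} y^{-\rho+\lambda}\,(P_\lambda f)(kM,y)=\cfcn(\lambda)f(kM),\qquad\text{uniformly in }kM\in B.
\]
Because the limit is uniform, the limit-formula \eqref{eq-bv-is-limit} in Lemma~\ref{lemma-bv-def-and-limit} identifies the left side with $\bv_{\rho-\lambda}P_\lambda f$, giving $\bv_{\rho-\lambda}P_\lambda f=\cfcn(\lambda)f$ in this parameter range. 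I then extend to general $f\in\Dprime(B)$ using density of $C^\infty(B)$ in each $\Sob{-m}(B)$ together with the continuity of $P_\lambda$ and $\bv_{\rho-\lambda}$ between appropriate Sobolev spaces; and I extend in $\lambda$ by analytic continuation, since $\lambda\mapsto\bv_{\rho-\lambda}P_\lambda$ is holomorphic on $\C\setminus-\halb\N_0$ by the holomorphy assertion in Lemma~\ref{lemma-bv-def-and-limit} applied to the holomorphic family $P_\lambda$, while $\cfcn$ has all its poles inside $-\N_0\subset-\halb\N_0$ and the complement is connected.

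\emph{Step 2: injectivity when $\cfcn(\lambda)\ne 0$ — the main obstacle.} The strategy is to use the Holmgren--Kashiwara theorem cited after Remark~\ref{remark-bv-definitions}: a tempered eigenfunction $u$ vanishes as soon as both boundary values $\bv_{\rho\pm\lambda}u$ vanish. Thus the problem reduces to propagating the vanishing of one boundary value to the other. Using the Weyl-group symmetry $\E_{-\lambda}=\E_\lambda$, the identity of Step~1 applied with $-\lambda$ in place of $\lambda$ reads $\bv_{\rho+\lambda}P_{-\lambda}=\cfcn(-\lambda)\Id$. Given $u$ with $\bv_{\rho-\lambda}u=0$, set $g=\cfcn(-\lambda)^{-1}\bv_{\rho+\lambda}u$ (treating the zero locus of $\cfcn(-\lambda)$ separately by analytic continuation) and $v=u-P_{-\lambda}g$; then $\bv_{\rho+\lambda}v=0$, and one must still verify $\bv_{\rho-\lambda}v=0$. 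This amounts to computing the composition $\bv_{\rho-\lambda}\circ P_{-\lambda}$, which in this rank-one setting realises a standard Knapp--Stein-type intertwiner $\Dprime(B)\to\Dprime(B)$, with a scalar factor that is nonzero precisely when $\cfcn(\lambda)\ne 0$. Performing this calculation — the hardest step and the technical heart of Helgason's conjecture in rank one — relies on the refined boundary-value analysis developed in Appendix~\ref{appx-bvmap}; granted the intertwiner identity, the chain $\bv_{\rho-\lambda}v=0\Rightarrow\bv_{\rho+\lambda}u=0\Rightarrow u=0$ closes the argument.

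\emph{Step 3: $P_\lambda$ is an isomorphism.} Once $\bv_{\rho-\lambda}$ is known to be injective on $\Et_\lambda(X)$, the isomorphism assertion for $P_\lambda$ is a formal consequence of \eqref{eq-beta-Poisson-equal-id}. Injectivity of $P_\lambda$ is immediate since $\bv_{\rho-\lambda}P_\lambda=\cfcn(\lambda)\Id$ is invertible when $\cfcn(\lambda)\ne 0$. For surjectivity, given $u\in\Et_\lambda(X)$, I set $f=\cfcn(\lambda)^{-1}\bv_{\rho-\lambda}u\in\Dprime(B)$; then $\bv_{\rho-\lambda}(u-P_\lambda f)=0$, so $u=P_\lambda f$ by the injectivity just established.
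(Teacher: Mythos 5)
Your Steps 1 and 3 are correct and coincide with the paper's argument: the identity \eqref{eq-beta-Poisson-equal-id} follows from the Helgason--Fatou theorem \eqref{Helgason-Fatou-thm} together with the limit formula \eqref{eq-bv-is-limit}, followed by density in $\lambda$-space and analytic continuation; and, granted injectivity of $\bv_{\rho-\lambda}$, the isomorphism assertion for $P_\lambda$ is the formal consequence you describe.

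Step 2, however, has a genuine gap. Your plan is to reduce to the Holmgren--Kashiwara criterion by constructing $v=u-P_{-\lambda}g$ with $\bv_{\rho+\lambda}v=0$ and then showing $\bv_{\rho-\lambda}v=0$. But $\bv_{\rho-\lambda}v=-\bv_{\rho-\lambda}P_{-\lambda}g=-\T_{-\lambda}g$, and there is no reason for this to vanish; the composition $\bv_{\rho-\lambda}P_{-\lambda}$ is the standard intertwiner, not a scalar multiple of the identity, so the phrase ``with a scalar factor that is nonzero precisely when $\cfcn(\lambda)\neq 0$'' does not describe a coherent operator identity. The closing chain ``$\bv_{\rho-\lambda}v=0\Rightarrow\bv_{\rho+\lambda}u=0\Rightarrow u=0$'' is also not a valid sequence of implications: even if $v=0$, one only obtains $u=P_{-\lambda}g$ with $\T_{-\lambda}g=0$, and concluding $g=0$ (hence $u=0$) requires precisely the injectivity of the intertwiner in the range $\cfcn(\lambda)\neq 0$ that is at stake --- in this paper that injectivity is established only in Section~\ref{sect-Scatt}, after Theorem~\ref{Thm-KKMOOT}, via the relation $S_\zeta S_{-\zeta}=\Id$, so invoking it here would be circular. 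The paper instead uses the $K$-averaging argument of \cite[\S 5]{Kashiwara78eigen}: normalize $u(o)=1$ by $G$-equivariance, form $\sfcn_\lambda=\int_K T_\lambda(k)u\intd k$, use equivariance again and \eqref{eq-spher-rep-on-Aprime-B} to get $\bv_{\rho-\lambda}\sfcn_\lambda=\int_K\pi_\lambda(k)\bv_{\rho-\lambda}u\intd k=0$, and compare with $\bv_{\rho-\lambda}\sfcn_\lambda=\cfcn(\lambda)$ from \eqref{eqbv-of-spher-fcn} to reach a contradiction. This averaging step is the essential idea your proposal is missing; your reduction to vanishing of both boundary values does not close without it.
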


Notice that the assumption on $\lambda$ rules out the poles of $\cfcn(\lambda)$.
Observe that the boundary values of the spherical function are the constants
\begin{equation}
\label{eqbv-of-spher-fcn}
\bv_{\rho\pm \lambda}\sfcn_{\lambda} = \cfcn(\mp \lambda).
\end{equation}

\begin{proof}
Let $f\in C(B)$.
Set $u=P_{\lambda}f$.
Define  the function $v:B\times[0,1[\to\C$
by $v=y^{-\rho+\lambda}u$ when $y>0$ and by $v=\cfcn(\lambda)f$ at $y=0$.
By \eqref{Helgason-Fatou-thm} we have $v\in C(B\times[0,1])$ when $\RE\lambda>0$.
It follows from Lemma~\ref{lemma-bv-def-and-limit} that the
equation $\bv_{\rho-\lambda}P_{\lambda}f=\cfcn(\lambda)f$ holds.
Using the holomorphicity assertion of Lemma~\ref{lemma-bv-def-and-limit},
the equation follows when $-2\lambda\not\in \N_0$ by analytic continuation.
Since $C(B)$ is dense in $\Dprime(B)$, \eqref{eq-beta-Poisson-equal-id} follows.

To prove the last assertion, we proceed as in the proof of the
Theorem in \cite[\S 5]{Kashiwara78eigen}.
Suppose there exists $0\neq u\in\Et_\lambda(X)$, $\bv_{\rho-\lambda}u=0$.
By the $G$-equivariance of the boundary value map we can assume that $u(o)=1$.
Then $\sfcn_{\lambda}=\int_K T_\lambda(k) u\intd k$ is the spherical function.
By the $G$-equivariance of the boundary value map we have
\[
\bv_{\rho-\lambda}\sfcn_\lambda =
\int_K \bv_{\rho-\lambda} T_\lambda(k) u\intd k = 
\int_K \pi_\lambda(k) \bv_{\rho-\lambda}  u\intd k = 0.
\]
Now \eqref{eqbv-of-spher-fcn} implies $\cfcn(\lambda)=0$.
\end{proof}

\section{Scattering Poles}
\label{sect-Scatt}

Following the ideas of geometric scattering theory, \cite{Melrose95geoscatt},
and consistent with Remark~\ref{remark-bv-definitions},
we require that the scattering matrix $S_\zeta$ satisfies
\begin{equation}
\label{def-scatt-geometric}
S_\zeta: \bv_{\rho-i\zeta}u\mapsto \bv_{\rho+i\zeta}u
\quad\text{if $(L-\rho^2-\zeta^2)u=0$.}
\end{equation}
For $\zeta\in\C$ such that $\cfcn(i\zeta)\neq 0$ and $2i\zeta\not\in\N_0$,
we define the scattering matrix as follows:
\begin{equation}
\label{def-scatt-op}
S_\zeta = \cfcn(i\zeta)^{-1} \bv_{\rho+i\zeta}P_{i\zeta}.
\end{equation}
Then, by Theorem~\ref{Thm-KKMOOT}, \eqref{def-scatt-geometric} holds for $u$ belonging to the range of $P_{i\zeta}$.
It follows from Lemma~\ref{lemma-bv-def-and-limit} that, for every $f\in\Dprime(B)$,
$S_\zeta f$ is holomorphic in $\zeta\in\C$ except for at most simple poles in $\IM\zeta>0$
and singularities in $(2i)^{-1}\N_0$.
Theorem~\ref{Thm-KKMOOT} implies also $P_{-i\zeta}\bv_{\rho+i\zeta}=\cfcn(-i\zeta)\Id$ if $\cfcn(-i\zeta)\neq 0$.
Therefore,
\begin{equation}
\label{def-scatt-op-intertwines}
\cfcn(i\zeta) P_{-i\zeta}S_\zeta = \cfcn(-i\zeta) P_{i\zeta}.
\end{equation}
Hence $\cfcn(i\zeta)P_{-i\zeta}S_\zeta S_{-\zeta} = \cfcn(-i\zeta) P_{i\zeta} S_{-\zeta}= \cfcn(i\zeta) P_{-i\zeta}$,
and
\begin{equation*}
S_\zeta S_{-\zeta} = \Id.
\end{equation*}

Corollary~\ref{cor-poisson-is-by-of-resolvent} combined with \eqref{def-scatt-op}
gives a formula which expresses the scattering matrix as boundary values of the resolvent:
\begin{equation*}
S_\zeta = 2i\langle\zeta,\alpha\rangle \bv_{\rho+i\zeta} (\bv_{\rho+i\zeta}R_\zeta)^{\prime}.
\end{equation*}
On the level of Schwartz kernels this amounts to taking boundary values in both variables.
Moreover, \eqref{eq-resol-scatt} and \eqref{def-scatt-op-intertwines}
imply the following relation between the resolvent, the scattering matrix,
the Poisson transform, and the $\cfcn$-function:
\begin{equation*}
R_{\zeta}- R_{-\zeta} = \frac{1}{2i\langle\zeta,\alpha\rangle \cfcn(-i\zeta)^2} P_{-i\zeta}S_{\zeta}^{\prime}P_{-i\zeta}^{\prime}.
\end{equation*}

The significance of the scattering matrix in representation theory is that it
intertwines spherical principal series representations.
The standard intertwiner $\T_\lambda$ from $\pi_\lambda$ to $\pi_{-\lambda}$
satisfies $\pi_{-\lambda}(g) \T_{\lambda} = \T_{\lambda} \pi_\lambda(g)$,
and it is, in the induced picture, determined by the integrals 
\[ \T_\lambda f(g)=\int_{\overline{N}}f(gw^*\bar n)d\bar n. \]
Here $w^*$ is a representative in the normalizer of $\a$ in $K$,
of the non-trivial Weyl group element $w=-1$.
See \cite[\S 6]{Kashiwara78eigen} and \cite[Ch.~\RN 7 \S 7]{Knapp86repexa}.
Specialized to our rank one situation, \cite[Proposition 6.1]{Kashiwara78eigen} implies
\begin{equation}
\label{eq-bv-P-T}
\bv_{\rho+\lambda}P_\lambda =\T_\lambda.
\end{equation}
According to \cite[Theorem 7.12]{Knapp86repexa},
$\T_\lambda f$, initially defined as an absolutely convergent integral when $\RE\lambda<0$,
extends meromorphically to the $\lambda$-plane with at most simple poles in the non-negative half-integers.
From the definition \eqref{def-scatt-op} and \eqref{eq-bv-P-T}
we get
\begin{equation}
\label{eq-Scatt-Intertwiner}
S_\zeta = \cfcn(\lambda)^{-1} \T_{\lambda}, \quad \lambda=i\zeta.
\end{equation}
This shows that the scattering matrix depends meromorphically on $\zeta$.
We have the following result on scattering poles, that is the possible
poles of the $S_\zeta f$, $f\in\Cinfty(B)$.
\begin{theorem}
\label{thm-scatt-poles-are-resonances}
The scattering poles are at most simple and located on the imaginary axis minus the origin.
The scattering poles in $\IM \zeta>0$ are precisely the resonances.
At a resonance $\zeta$, $\bv_{\rho+i\zeta}$ maps the range of
$\Res R_\zeta$ isomorphically onto the range of $\Res S_\zeta$;
in particular, the residues of the resolvent
and of the scattering matrix have the same finite rank.
The non-resonance scattering poles are the poles of the standard intertwiner $\T_{i\zeta}$;
they are contained in $(2i)^{-1}\N$.
At $\IM\zeta<0$, the residue $\Res S_\zeta$ is a non-zero multiple of the residue $\Res \T_{i\zeta}$.
\end{theorem}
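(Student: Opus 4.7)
The plan is to leverage the factorization $S_\zeta = \cfcn(i\zeta)^{-1}\T_{i\zeta}$ from \eqref{eq-Scatt-Intertwiner}, which reduces the pole analysis to two sources: zeros of $\cfcn(i\zeta)$ and poles of $\T_{i\zeta}$. From the explicit Gamma-function formula in Section~\ref{sect-Resonances}, the zeros of $\cfcn(\lambda)$ come from poles of the two denominator Gammas and therefore sit on the real $\lambda$-axis; Remark~\ref{remark-cfcn-zeros} further says they are simple. The poles of $\T_{i\zeta}$ sit at $i\zeta\in\halb\N_0$, i.e.\ $\zeta\in(2i)^{-1}\N_0$, and are simple by the Knapp result cited after \eqref{eq-bv-P-T}. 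The origin is not a pole of $S_\zeta$ because $\cfcn(0)=\infty$ (from the numerator $\Gamma(\lambda)$) supplies a simple zero of $\cfcn(i\zeta)^{-1}$ at $\zeta=0$ that cancels any simple pole of $\T_0$. This already gives the simplicity and the location on $i\R\setminus\{0\}$.

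For the split between the half-planes, I would observe that $\cfcn(\lambda)\neq 0$ for $\RE\lambda\geq 0$ (all three Gammas are finite and positive at non-negative real arguments), so the zeros of $\cfcn(i\zeta)$ in $\IM\zeta>0$ occur exactly at $\zeta_0=i(\rho+jk)$, $k\in\N_0$, matching the resonance list of Section~\ref{sect-Resonances}. In $\IM\zeta<0$, $i\zeta$ is positive real and $\cfcn(i\zeta)\neq 0$, so only the poles of $\T_{i\zeta}$ contribute; there $\Res_\zeta S_\zeta=\cfcn(i\zeta)^{-1}\Res_\zeta\T_{i\zeta}$ is a nonzero multiple of $\Res\T_{i\zeta}$, yielding the last two assertions.

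For the residue isomorphism at a resonance $\zeta_0$, we have $\cfcn(i\zeta_0)=0$ simply while $\cfcn(-i\zeta_0)\neq 0$. Combining \eqref{def-scatt-op} with \eqref{eq-bv-P-T} gives $\Res S_{\zeta_0}=(i\cfcn'(i\zeta_0))^{-1}\T_{i\zeta_0}$, and \eqref{eq-residue-of-resonance} together with \eqref{eq-bv-P-T} gives
\[
\bv_{\rho+i\zeta_0}\Res R_{\zeta_0}\;=\;\frac{-1}{2\langle\zeta_0,\alpha\rangle\cfcn'(i\zeta_0)\cfcn(-i\zeta_0)}\,\T_{i\zeta_0}P_{-i\zeta_0}^{\prime}.
\]
The crucial link is the identity $\T_{i\zeta}P_{-i\zeta}^{\prime}=\cfcn(-i\zeta)P_{i\zeta}^{\prime}$, which I would derive by applying $\bv_{\rho+i\zeta}$ to the first variable of \eqref{eq-resol-scatt}: the LHS uses $\bv_{\rho+i\zeta}R_\zeta=P_{i\zeta}^{\prime}/(2i\langle\zeta,\alpha\rangle\cfcn(i\zeta))$ from Corollary~\ref{cor-poisson-is-by-of-resolvent} and the fact that $\bv_{\rho+i\zeta}R_{-\zeta}=0$, which follows from \eqref{R-asymptotics} with $\zeta$ replaced by $-\zeta$ since then the kernel carries only the $y^{\rho-i\zeta}$ leading term; the RHS uses \eqref{eq-bv-P-T}. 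Injectivity of $\bv_{\rho+i\zeta_0}$ on $\E_{i\zeta_0}(X)=\E_{-i\zeta_0}(X)$ follows from Theorem~\ref{Thm-KKMOOT} applied at $\lambda=-i\zeta_0$, which is permissible because $\cfcn(-i\zeta_0)\neq 0$ and $-i\zeta_0\notin-\halb\N_0$. Hence $\bv_{\rho+i\zeta_0}$ restricts to an injection of $\operatorname{range}(\Res R_{\zeta_0})$ into $\operatorname{range}(\Res S_{\zeta_0})$.

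The main obstacle is promoting this injection to a bijection. Using the key identity, $\bv_{\rho+i\zeta_0}(\operatorname{range}(\Res R_{\zeta_0}))$ equals, up to a nonzero scalar, $\T_{i\zeta_0}(P_{-i\zeta_0}^{\prime}(\Eprime(X)))$, whereas $\operatorname{range}(\Res S_{\zeta_0})$ equals, up to a scalar, $\T_{i\zeta_0}(\Cinfty(B))$. My plan is to establish density of $P_{-i\zeta_0}^{\prime}(\Eprime(X))$ in $\Cinfty(B)$ by a duality argument from injectivity of $P_{-i\zeta_0}$ (Theorem~\ref{Thm-KKMOOT} at $\lambda=-i\zeta_0$), and then to combine this with continuity of $\T_{i\zeta_0}$ and the cited finite-rank property of $\Res R_{\zeta_0}$ from \cite{MiatelloWill99resid,HilgertPasquale09reson}: the image $\T_{i\zeta_0}(P_{-i\zeta_0}^{\prime}(\Eprime(X)))$ already has finite dimension, hence is closed, so density forces $\T_{i\zeta_0}(\Cinfty(B))=\T_{i\zeta_0}(P_{-i\zeta_0}^{\prime}(\Eprime(X)))$. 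This simultaneously produces the bijection and transports the finite-rank property from the resolvent residue to the scattering residue.
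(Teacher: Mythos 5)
Your proof is correct and follows essentially the same route as the paper's: the factorization $S_\zeta=\cfcn(i\zeta)^{-1}\T_{i\zeta}$ for location and simplicity, the cancellation at the origin, and then (for a resonance $\zeta_0$) injectivity of $\bv_{\rho+i\zeta_0}$ from Theorem~\ref{Thm-KKMOOT}, density of $P_{-i\zeta_0}^{\prime}(\Eprime(X))$ in $\Cinfty(B)$, and the finite rank of $\Res R_{\zeta_0}$ to force the injection to be a bijection. The one genuinely new ingredient you add is the identity $\T_{i\zeta}P_{-i\zeta}^{\prime}=\cfcn(-i\zeta)P_{i\zeta}^{\prime}$, obtained by applying $\bv_{\rho+i\zeta}$ in the first variable of \eqref{eq-resol-scatt} together with the observation $\bv_{\rho+i\zeta}R_{-\zeta}=0$ (both correct for $\IM\zeta>0$). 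The paper bypasses this by going directly from \eqref{def-scatt-op} and \eqref{eq-residue-of-resonance} to the relation $(\Res S_\zeta)P_{-i\zeta}^{\prime}=2i\langle\zeta,\alpha\rangle\cfcn(-i\zeta)\bv_{\rho+i\zeta}(\Res R_\zeta)$, which encodes the same information; your detour is a valid and somewhat cleaner repackaging but does not change the underlying argument.
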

\begin{proof}
From \eqref{eq-Scatt-Intertwiner} and the meromorphy of $\cfcn(\lambda)$ and $\T_\lambda$
it is clear that $S_\zeta$ is meromorphic with at most simple poles.
Furthermore, it follows that the scattering poles are either resonances
or poles of the standard intertwiner $\T_{i\zeta}$.
The simple pole of the $\cfcn$-function at the origin cancels a possible pole of the intertwiner.
Hence zero is not among the scattering poles.

Let $\zeta$ be a resonance, $\cfcn(i\zeta)=0$ and $\zeta\in i\R_+$.
From the definition \eqref{def-scatt-op} we obtain the following formula for residues:
\[ \Res S_\zeta = (i\cfcn'(i\zeta))^{-1} \bv_{\rho+i\zeta}P_{i\zeta}.  \]
In view of \eqref{eq-residue-of-resonance} this gives
\begin{equation}
\label{eq-Scatt-vs-Resolv-Residue}
(\Res S_\zeta) P_{-i\zeta}^{\prime}
   = 2i\langle\zeta,\alpha\rangle \cfcn(-i\zeta) \bv_{\rho+i\zeta} (\Res R_\zeta).
\end{equation}
It follows from Theorem~\ref{Thm-KKMOOT} that $\bv_{\rho+i\zeta}$ is injective because $\cfcn(-i\zeta)\neq 0$.
Moreover, $P_{-i\zeta}$ is an isomorphism onto $\Et_{i\zeta}(X)$;
thus $P_{-i\zeta}'$ has dense range in $\Cinfty(B)$.
Now it follows from \eqref{eq-Scatt-vs-Resolv-Residue} that $\bv_{\rho+i\zeta}$ maps
the range of $\Res R_\zeta$, which is finite-dimensional,
bijectively onto the range of $\Res S_\zeta$.

Let $\zeta$ be a scattering pole in $\IM\zeta\leq 0$.
Then $\zeta\neq 0$, $2i\zeta\in\N$, and $\zeta$ is a pole of $\T_{i\zeta}$.
Using \eqref{eq-Scatt-Intertwiner} and $\cfcn(i\zeta)\neq 0$,
we obtain the formula $\Res S_\zeta=\cfcn(i\zeta)^{-1}\Res \T_{i\zeta}$ for the residues.
\end{proof}

\appendix
\section{Proof of Lemma~\ref{lemma-bv-def-and-limit}}
\label{appx-bvmap}

We consider the Riemannian manifold
$M=B\times]-\eta,\eta[$ and the open subset $\Omega=B\times]0,\eta[$.
Let $r:\Dprime(M)\to\Dprime(\Omega)$ denote the restriction map,
and $\Delta$ the nonnegative Laplacian on $M$.
By the variational theory of the Dirichlet problem,
$\Delta^m+1$ is a strongly elliptic differential operator of order $2m$
which maps $\SobDot{m}(\Omega)$ isomorphically onto $\Sob{-m}(\Omega)$.
It is standard, \cite[Lemma 12.2]{LionsMagenes72bvp}, that there exist
extension operators $e_m:\Sob{m}(\Omega)\to\Sob{m}(M)$, $m\in\Z$, that is, $re_m$ is the identity map.
Allowing for a loss of regularity, extensions can be chosen with supports contained
in the closure $\bar\Omega$ of $\Omega$.
In fact, for $m=0,1,2,\ldots$ there exist extension operators
\begin{equation*}
E_m:\Sob{-m}(\Omega)\to\Sob{-2m}(M), \quad rE_mu=u,\quad \supp(E_mu)\subset\bar\Omega,
\end{equation*}
$u\in\Sob{-m}(\Omega)$.
The definition of $E_0$ is clear.
If $m>0$ we define $E_m=(\Delta^m+1)E_0(\Delta^m+1)^{-1}$.

Next we consider multiplication of elements in $\Sob{-m}(\Omega)$ by $y^s$, $s\in\C$.
Recall from Section~\ref{sect-Poisson} the definition of $\SobDot{m}(\Omega)$,
and the continuous inclusion $\SobDot{m}(\Omega)\subset C_b^k(M)$ which
holds for $m> k+\dim(M)/2$ by the Sobolev Lemma.
If $\RE s>-N$ and $k\geq N$, then $u\in C_b^k(M)$, $\supp(u)\subset\bar\Omega$,
implies that $y^su$, defined to be zero when $y\leq 0$, belongs to $C_b^{k-N}(M)$.
Hence we have $y^s\cdot \SobDot{m}(\Omega)\subset \SobDot{k-N}(\Omega)$.
It follows from the closed graph theorem that the multiplication map $y^s\cdot$ is continuous.
Hence, for $m_j,N\in\N$, it follows by duality that
\begin{equation*}
y^s\cdot:\Sob{-m_1}(\Omega)\to\Sob{-m_2}(\Omega) \quad \text{if $\RE s>-N$, and $m_2>m_1+N+\halb\dim(M)$.}
\end{equation*}
Furthermore, the map $s\mapsto y^s\cdot$ is holomorphic with values
in the Banach space of bounded linear operators
from $\Sob{-m_1}(\Omega)$ to $\Sob{-m_2}(\Omega)$.
The complex derivative at $s$ is the multiplication operator $\log(y)y^s\cdot$.

Let $\psi\in\Ccinfty(]-\eta,\eta[)$.
Then we define the push-forward $\iota_\psi:\Sob{-m}(M)\to\Sob{-m}(B)$ by integration:
$\iota_\psi(u)(b)=\int u(b,y)\psi(y)\intd y$.
Observe that $\iota_\psi(w\otimes\delta(y))=\psi(0)w$.

Given $\lambda\in\C$, we consider the differential operator
\[
L_\lambda = y^{-\rho+\lambda} (L-\rho^2+\lambda^2)y^{\rho-\lambda} 
    = -\theta (\theta-2\lambda) + y C(b,y,\partial_b,\rho-\lambda+\theta).
\]
As before $L=\ksq{\alpha}^{-1}\laplaceop$ is the Laplacian.
We have set $\theta=y\partial_y$, and we used \eqref{laplacian-in-ky-coord}.
The indicial polynomial of $L_\lambda$ is $I_\lambda(s)=-s(s-2\lambda)$.
Because of $I_\lambda(0)=0$, $y^{-1}L_\lambda$ is a
differential operator with real analytic coefficients in $M$.
The significance of the indicial polynomial is seen in the formula
\begin{equation}
\label{eq-Lz-on-tensor}
L_\lambda \big(w\otimes \delta^{(j)}(y)\big)= I_\lambda(-j-1) w\otimes \delta^{(j)}(y)
    + \sum\nolimits _{\ell<j} (L_{\lambda,j,\ell}w)\otimes \delta^{(\ell)}(y),
\end{equation}
$w\in\Dprime(B)$.
Here the $L_{\lambda,j,\ell}$'s are differential operators on $B$.

We consider the action of $L_\lambda$ on distributions supported in $y=0$.
Denote by $\Sob{-m}_B(M)$ the subspace of $\Sob{-m}(M)$ which
consists of elements having their supports contained in $B\times\{0\}$.
If $f\in\Sob{-m}_B(M)$ and $m'>m+\dim(B)/2$, then $f$
is a finite sum of tensor products
\[ f(b,y) = \sum\nolimits _{j\leq J} f_j(b)\otimes\delta^{(j)}(y), \quad f_j\in \Sob{-m'}(B); \]
see \cite[Example~5.1.2]{Hormander90anaOne}, and invoke the Sobolev Lemma.
Assume that $2\lambda$ is not a negative integer.
Then the equation $L_\lambda w=f$ has a unique solution $w=S_\lambda f$ with $\supp w\subset \{y=0\}$.
In fact, using \eqref{eq-Lz-on-tensor} and the assumption on $\lambda$,
we inductively determine $w_J,\dots,w_1,w_0$
such that $w(b,y) = \sum\nolimits _{j\leq J} w_j(b)\otimes\delta^{(j)}(y)$
solves the equation.
Moreover, if the $f_j$'s depend holomorphically on the parameter $\lambda$, then so do the $w_j$'s.
For some $n$ depending on $m$,
$S_\lambda:\Sob{-m}_B(M)\to\Sob{-n}_B(M)$ is a bounded linear operator.

Let $u\in\Sob{-m_1}(\Omega)$.
Let $\lambda\in\C$, $2\lambda\not\in -\N_0$.
Assume $(L-\rho^2+\lambda^2)u=0$.
Set $v=y^{-\rho+\lambda}u$.
Then $L_\lambda v=0$.
For some $m$, $v\in\Sob{-m}(\Omega)$ holds.
Thus $f=L_\lambda E_m v$ is defined, and $\supp(f)\subset\{y=0\}$.
Define $\tilde v=E_mv-S_\lambda f\in\Dprime(M)$.
Then $\tilde v$ is the unique extension of $v$
which satisfies $L_\lambda \tilde v=0$ in $M$ and $\supp(\tilde v)\subset\{y\geq 0\}$.
It follows that $(y^{-1}L_\lambda)\tilde v= w\otimes\delta(y)$.
The boundary value of $u$ is, by definition, $\bv_{\rho-\lambda}u=(2\lambda)^{-1}w$.
Observe that the uniqueness of the extension $\tilde v$
implies the last assertion of Lemma~\ref{lemma-bv-def-and-limit}.

Next we prove the formula \eqref{eq-bv-is-limit}.
\begin{lemma}
\label{lemma-bv-def-justified}
Assume that $u=vy^{\rho-\lambda}$ with $v\in C(\bar\Omega)$.
Then $\bv_{\rho-\lambda}u=v|_{y=0}$.
\end{lemma}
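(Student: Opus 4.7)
The plan is to realize the canonical extension $\tilde v$ of $v = y^{-\rho+\lambda}u$ explicitly as $H(y)\Phi$ for a continuous extension $\Phi\in C(M)$ of $v$, and then to read off the coefficient $w$ in $(y^{-1}L_\lambda)\tilde v = w\otimes\delta(y)$ by a direct Fuchsian computation.

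First, I would pick any compactly supported continuous extension $\Phi\in C(M)$ of $v$, and set $\tilde v_0 := H(y)\Phi\in L^\infty(M)$. Then $\tilde v_0|_{\Omega}=v$ and $\supp\tilde v_0\subset\{y\geq 0\}$. The Fuchsian structure of $L_\lambda=-\theta(\theta-2\lambda)+yC$ is exploited via the elementary distributional facts $y\delta(y)=0$ and $y\delta'(y)=-\delta(y)$, which yield, for smooth $\Psi\in\Cinfty(M)$, the commutations $\theta(H\Psi)=H\theta\Psi$ and $\partial_b^\alpha(H\Psi)=H\partial_b^\alpha\Psi$, and hence by iteration
\[
L_\lambda(H\Psi)=H\,L_\lambda\Psi,\qquad P(H\Psi)=2\lambda\,\Psi|_{y=0}\otimes\delta(y)+HP\Psi,
\]
where $P:=y^{-1}L_\lambda=(2\lambda-1)\partial_y-y\partial_y^2+C$ has smooth coefficients on $M$. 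If $v$ (and hence $\Phi$) is smooth, the first identity shows $L_\lambda\tilde v_0=HL_\lambda v=0$, so by the uniqueness in Lemma~\ref{lemma-bv-def-and-limit}, $\tilde v=\tilde v_0$; the second identity with $HP\Phi=0$ (since $P\Phi=y^{-1}L_\lambda v=0$ on $\{y>0\}$) gives $w=2\lambda v|_{y=0}$ and the claim $\bv_{\rho-\lambda}u=v|_{y=0}$.

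For merely continuous $v$ the two identities survive by approximation. Mollify $\Phi$ to $\Phi_\epsilon\in\Cinfty(M)$ with $\Phi_\epsilon\to\Phi$ in $L^\infty_{\operatorname{loc}}(M)$ and $\Phi_\epsilon|_{y=0}\to v|_{y=0}$ uniformly on $B$. Since $L_\lambda$ and $P$ are continuous on the Sobolev spaces of negative order relevant to $\Et_\lambda(X_\eta)$, the smooth-case identities pass to the limit in $\Dprime(M)$. I then pair $P\tilde v$ with a test function $\beta(b)\chi_\epsilon(y)$, where $\beta\in\Cinfty(B)$ and $\chi_\epsilon(y):=\chi(y/\epsilon)$ for a symmetric $\chi\in\Ccinfty(]-1,1[)$ with $\chi(0)=1$. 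Because $P\tilde v=w\otimes\delta$, the value $\langle w,\beta\rangle=\langle\tilde v,P^t(\beta\chi_\epsilon)\rangle$ is independent of $\epsilon$. Evaluating the right hand side using $\tilde v|_\Omega=v$, the transpose $P^t=-(2\lambda+1)\partial_y-y\partial_y^2+C^t$, the change of variable $s=y/\epsilon$, and uniform continuity of $v$ on $\bar\Omega$, the dominant contributions are
\[
-(2\lambda+1)\!\!\int_\Omega\! v\beta\chi_\epsilon'\to (2\lambda+1)\!\int_B\!\beta\,v|_{y=0},\qquad -\!\int_\Omega\! v\beta\,y\chi_\epsilon''\to -\!\int_B\!\beta\,v|_{y=0},
\]
using $\int_0^\infty\chi'(s)\intd s=-1$ and $\int_0^\infty s\chi''(s)\intd s=1$, while the $C^t$-contribution vanishes since its integrand is $L^\infty$-bounded on a support of length $O(\epsilon)$. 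Summing gives $\langle w,\beta\rangle=2\lambda\int_B\beta\,v|_{y=0}$, hence $w=2\lambda v|_{y=0}$ and $\bv_{\rho-\lambda}u=(2\lambda)^{-1}w=v|_{y=0}$.

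The main obstacle is the passage from smooth to continuous $\Phi$ in both the verification of $L_\lambda\tilde v_0=0$ and the vanishing of the remainder $HP\Phi_\epsilon$ in the limit, since for continuous $\Phi$ the product $H\cdot L_\lambda\Phi$ with the discontinuous Heaviside function is not a priori defined. This is handled by working entirely at the level of mollifications, where every product in sight is a classical $L^\infty$ product, and by using the uniqueness assertion of Lemma~\ref{lemma-bv-def-and-limit} together with the concentrating-test-function computation above to identify the delta-coefficient in the limit.
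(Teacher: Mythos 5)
Your computation for smooth $v$ is correct and is essentially the paper's argument run at its endpoint: the distributional identities $y\delta(y)=0$ and $y\delta'(y)=-\delta(y)$ applied to $H(y)\Psi$ produce exactly the commutator terms that the paper extracts from $[y^{-1}L_\lambda,\chi_\eps]$ in the limit $\eps\to 0$. The difficulty, as you yourself observe, is the step from smooth to merely continuous $v$, and that step is where your argument has a genuine gap. You write that "the smooth-case identities pass to the limit in $\Dprime(M)$" and then, a sentence later, correctly note that the product $H\cdot L_\lambda\Phi$ is not a priori defined for continuous $\Phi$. These two statements are in tension: along the mollification $\Phi_\eps$ the identity $L_\lambda(H\Phi_\eps)=HL_\lambda\Phi_\eps$ holds, but $HL_\lambda\Phi_\eps$ involves the ill-defined limit product, so you cannot conclude $L_\lambda(H\Phi)=0$ this way; the term $HP\Phi_\eps$ in the other identity has second derivatives of $\Phi_\eps$ that are not controlled, so it need not tend to zero.

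The concentrating-test-function computation is internally correct (the integrals $\int_0^\infty\chi'\intd s=-1$, $\int_0^\infty s\chi''\intd s=1$ and the $O(\eps)$ bound on the $C^t$-term are right, and they do sum to $2\lambda\int_B\beta\,v|_{y=0}$), but it evaluates $\langle\tilde v,P^t(\beta\chi_\eps)\rangle$ by replacing $\tilde v$ with the extension-by-zero $E_0v=H\Phi$, i.e.\ by integrating only over $\Omega$. A priori the canonical extension $\tilde v$ of Lemma~\ref{lemma-bv-def-and-limit} may differ from $E_0v$ by a distribution supported in $\{y=0\}$, and such a term contributes to the pairing. Establishing $\tilde v=E_0v$ is equivalent to $L_\lambda(E_0v)=0$, which is precisely the content you were unable to push through the mollification. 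The paper avoids the problem entirely by replacing the sharp Heaviside $H$ with a smooth cutoff $\chi_\eps$ \emph{of the boundary variable}, writing $(y^{-1}L_\lambda)(\chi_\eps v)=[y^{-1}L_\lambda,\chi_\eps]v$, and letting $\eps\to 0$: since the commutator hits $\chi_\eps$ rather than $v$, only continuity of $v$ is used, $\chi_\eps v\to E_0v$ in $L^2_{\mathrm{loc}}$, and the limit $2\lambda v|_{y=0}\otimes\delta(y)$ simultaneously shows that $E_0v$ is the canonical extension (multiply by $y$) and reads off the boundary value. If you want to salvage your route, you need an independent argument that $L_\lambda(E_0v)=0$ for continuous $v$; the paper's cutoff computation is the natural way to supply it, and once you have it the concentrating-test-function step becomes redundant.
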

\begin{proof}
To prove the lemma it suffices to show that
\begin{equation}
\label{eq-proof-bv}
(y^{-1}L_\lambda) E_0 v = 2\lambda v|_{y=0}\otimes\delta(y).
\end{equation}
Indeed, this will in particular imply that $\tilde v=E_0 v$ satisfies $L_\lambda\tilde v=0$.

Fix $\chi\in\Cinfty(\R)$, $0\leq \chi\leq 1$, $\chi(y)=0$ if $y<1/2$, and $\chi(y)=1$ if $y\geq 1$.
For $0<\eps<1$ set $\chi_\eps(y)=\chi(y/\eps)$.
Clearly, $\chi_\eps v\to E_0 v$ in $\Lloc^2(M)$ as $\eps\to 0$.
We calculate the limit as $\eps\to 0$ of
\[ (y^{-1}L_\lambda)(\chi_\eps v) = [y^{-1}L_\lambda,\chi_\eps] v. \]
Here $\chi_\eps$ is viewed as a multiplication operator, and the bracket denotes a commutator.
Using the formula
$y^{-1}L_\lambda = (2\lambda-\theta-1)\partial_y + C(b,y,\partial_b,\rho-\lambda+\theta)$
we can write:
\[
[y^{-1}L_\lambda,\chi_\eps]
  = (2\lambda-\theta-1)[\partial_y,\chi_\eps] + \sum\nolimits _{k\leq 2} Q_k(b,y,\partial_b,\partial_y) \ad(\theta)^k \chi_\eps,
\]
where $\ad(\theta)=[\theta,\cdot]$.
Observe that, $\ad(\theta)^k\chi_\eps$ is supported in $0<y\leq \eps$ and uniformly bounded.
Therefore, $\ad(\theta)^k\chi_\eps v\to 0$ as $\eps\to 0$ in $\Dprime(M)$.
Moreover, we have $\lim_{\eps\to 0} [\partial_y,\chi_\eps]=\delta(y)$, and $(\theta+1)\delta(y)=0$.
This implies \eqref{eq-proof-bv}, hence the lemma.
\end{proof}

The boundary value map is the restriction to the eigenspaces $\Et_{\lambda}(X_\eta)$
of a holomorphic operator-valued map which maps $\lambda$ to a bounded linear operator
\begin{equation}
\label{eq-bv-on-Sobolev}
\bv_{\rho-\lambda}:\Sob{-m_1}(\Omega)\to \Sob{-m_2}(B), \quad \RE\lambda>-N.
\end{equation}
In fact, summarizing the arguments preceeding Lemma~\ref{lemma-bv-def-justified}
we define the boundary value map as a composition of operators:
\begin{equation*}
\bv_{\rho-\lambda}u = \frac{1}{2\lambda} \iota_\psi (y^{-1}L_\lambda)
    \big(E_{m} + S_\lambda (E_{m+2} L_\lambda-L_\lambda E_{m})\big) y^{-\rho+\lambda}\cdot u.
\end{equation*}
Here $\psi(0)=1$, and $m, m_2$ depend on $m_1, N$.
The assertion about holomorphy in Lemma~\ref{lemma-bv-def-and-limit} now follows from the fact that
$\lambda\mapsto \bv_{\rho-\lambda} f(\lambda)$ is a product of holomorphic mappings.
Notice that the map \eqref{eq-bv-on-Sobolev} is not canonical, whereas
\[ \bv_{\rho-\lambda}:\Et_\lambda(X_\eta)\to\Dprime(B), \]
the restriction to the eigenspace, is.

Finally, we prove the $G$-equivariance of the boundary value map.
See \cite[Theorem~3.7]{Oshima83bdryval} for coordinate invariance.
Let $g\in G$.
Recall the map $\Phi_g:(b,y)\mapsto(b',y')$ in \eqref{eq-Phi-g-at-B} which extends $x\mapsto g^{-1}\cdot x$
to a real analytic diffeomorphism between connected open neighbourhoods of $y=0$.
Denoting by $\Phi_g^*$ the pullback operator, $\Phi_g^* u(b,y)=u(b',y')$,
we have $\Phi_g^*\circ L=L\circ\Phi_g^*$.
Observe that $\Phi_g^*=T_\lambda(g)$ when suitably restricting to $y>0$.
Using \eqref{eq-Phi-g-at-B}, we find that
\begin{equation}
\label{eq-transf-Llambda}
\gamma_g^{\rho-\lambda} \Phi_g^* L_\lambda =L_\lambda\gamma_g^{\rho-\lambda} \Phi_g^*.
\end{equation}
Let $u\in\Et_\lambda(X)$.
Denote by $\tilde u$ the unique distribution in $M$ which satisfies
\[
\tilde u|_{y>0}=y^{\lambda-\rho}u, \quad \supp(\tilde u)\subset\{y\geq 0\}, \quad L_\lambda\tilde u=0.
\]
Then, by definition,
$(y^{-1}L_\lambda) \tilde u = 2\lambda \bv_{\rho-\lambda}u\otimes\delta(y)$.
Set $v=T_\lambda(g)u$, and $\tilde v= \gamma_g^{\rho-\lambda}\Phi_g^*\tilde u$.
Since $\Phi_g$ preserves $y\geq 0$, $\supp(\tilde v)\subset\{y\geq 0\}$ holds.
In view of \eqref{eq-transf-Llambda}, $L_\lambda \tilde v=0$ holds.
Moreover, using \eqref{eq-Phi-g-at-B} we get
\[ \tilde v(b,y)= \gamma_g(b,y)^{\rho-\lambda} (y')^{\lambda-\rho} u(b',y') =y^{\lambda-\rho}v(b,y) \quad\text{in $y>0$.} \]
Therefore, we shall calculate the boundary value of $v$ from
\[
2\lambda \bv_{\rho-\lambda}v\otimes\delta(y) = (y^{-1}L_\lambda) \tilde v 
   =\gamma_g^{\rho-\lambda+1} \Phi_g^* (y^{-1} L_\lambda) \tilde u.
\]
To derive the last equality we inserted the definition of $\tilde v$
and we used the analogue of \eqref{eq-transf-Llambda} for $(y^{-1}L_\lambda)$.
Hence
\[ \bv_{\rho-\lambda}v\otimes\delta(y) =\gamma_g^{\rho-\lambda+1} \Phi_g^* (\bv_{\rho-\lambda}u\otimes\delta(y)).  \]
In view of \eqref{eq-Phi-g-at-B} we have the following formula for push-forwards of densities:
\[ (\Phi_g)_*\intd b\intd y = (\gamma_g + \bigoh(y)) \big((\Phi_g|_{y=0})_*\intd b\big)\intd y. \]
By duality, 
$\Phi_g^*(w(b)\otimes\delta(y))= \gamma_g(b,0) w(b'(b,0))\otimes \delta(y)$
holds.
Summarizing we obtain
\[ 
(\bv_{\rho-\lambda}v)(kM)\otimes\delta(y)
  = e^{(\rho-\lambda)(A(g\cdot o,kM))}(\bv_{\rho-\lambda}u)(\kappa(g^{-1}k)M)\otimes\delta(y).
\]
Hence $\bv_{\rho-\lambda}:\Et_\lambda(X)\to\Dprime(B;(N^*B)^{\rho-\lambda})$  is $G$-equivariant.

\newcommand{\etalchar}[1]{$^{#1}$}

\end{document}